\def\l@subsection{\@tocline{2}{0pt}{2.5pc}{5pc}{}}
\DeclareRobustCommand{\SkipTocEntry}[5]{}
\let\oldfootnotemark\footnotemark
\let\oldfootnotetext\footnotetext
\let\oldfootnote\footnote
\renewcommand\footnote[1]{\addtocounter{footnote}{1}\hypertarget{fnbackref.\arabic{footnote}}{}\addtocounter{footnote}{-1}\oldfootnote{#1\fnbackref}}
\renewcommand\footnotemark{\addtocounter{footnote}{1}\hypertarget{fnbackref.\arabic{footnote}}{}\addtocounter{footnote}{-1}\oldfootnotemark}
\renewcommand\footnotetext[1]{\oldfootnotetext{#1\fnbackref}}
\newcommand{\fnbackref}{\hyperlink{fnbackref.\arabic{footnote}}{\footnotesize$\uparrow$}}
\theoremstyle{definition}
\newtheorem{dfn}{Definition}[subsection]
\newaliascnt{assumption}{dfn}
\newaliascnt{assumptions}{dfn}
\theoremstyle{remark}
\newaliascnt{rmk}{dfn}
\newtheorem{rmk}[rmk]{Remark}
\newaliascnt{ex}{dfn}
\newtheorem{ex}[ex]{Example}
\theoremstyle{plain}
\newaliascnt{thm}{dfn}
\newtheorem{thm}[thm]{Theorem}
\newaliascnt{prop}{dfn}
\newtheorem{prop}[prop]{Proposition}
\newaliascnt{prop2}{dfn}
\newaliascnt{lem}{dfn}
\newtheorem{lem}[lem]{Lemma}
\newaliascnt{cor}{dfn}
\newtheorem{cor}[cor]{Corollary}
\DeclareMathOperator{\Hom}{Hom}
\DeclareMathOperator{\End}{End}
\DeclareMathOperator{\thick}{thick}
\DeclareMathOperator{\Aut}{Aut}
\DeclareMathOperator{\im}{im}
\DeclareMathOperator{\cone}{cone}
\DeclareMathOperator{\id}{id}
\DeclareMathOperator{\Tot}{Tot}
\DeclareMathOperator{\Bl}{Bl}
\newcommand{\derived}{\mathrm{D}}
\newcommand{\bounded}{\mathrm{b}}
\newcommand{\qc}{\mathrm{qc}}
\newcommand{\dk}{\derived_{\qc}(\widehat{X}) / \mcr{K}}
\newcommand{\dqc}{\derived_{\qc}(X_{+})}
\newcommand{\inj}{\mathbf{h}\text{-}\mathbf{inj}}
\newcommand{\gen}[1]{\langle #1 \rangle}
\newcommand{\cgen}[1]{\langle #1 \rangle^{\oplus}}
\newcommand{\tgen}[1]{\langle #1 \rangle^{\thick}}
\newcommand{\mb}[1]{\mathbb{#1}}
\newcommand{\ca}[1]{\mathcal{#1}}
\newcommand{\mcr}[1]{\mathscr{#1}}
\newcommand{\BigWedge}{\mathord{\adjustbox{valign=top,totalheight=0.8\baselineskip}{$\bigwedge$}}} 
\newenvironment{sizepar}[2]
    {\fontsize{#1}{#2}\selectfont}
    {\par}
\newcommand{\nodeequation}[1]{%
  \let\label\ltx@label
  \refstepcounter{equation}%
  #1
  \quad
  (\theequation)%
}
\newcommand{\nodeequationleft}[1]{%
  \let\label\ltx@label
  \refstepcounter{equation}%
  (\theequation)
  \quad
  #1
}
\newcommand{\nodeequationcenter}[1]{%
  \let\label\ltx@label
  \refstepcounter{equation}%
  (\theequation)
  #1
}
\def\namedlabel#1#2{\begingroup
    #2%
    \def\@currentlabel{#2}%
    \phantomsection\label{#1}\endgroup
}
\begin{document}


\title{Spherical functors and the flop-flop autoequivalence}


\author{Federico Barbacovi}
\address{Department of Mathematics, University College London}
\email{federico.barbacovi.18@ucl.ac.uk}





\begin{abstract}
  Flops are birational transformations which, conjecturally, induce derived equivalences.
  In many cases an equivalence can be produced as pull-push via a resolution of the birational transformation;
  when this happens, we have a non-trivial autoequivalence of either sides of the flop known as the \emph{flop-flop autoequivalence}.

  We prove that such autoequivalence can be realised as the inverse of a spherical twist around a conservative spherical functor in a natural way.
  More precisely, we prove that a natural, conservative spherical functor exists in a more general framework and that the flop-flop autoequivalence fits into this picture.

  We also give an explicit description of the source category of the spherical functor for standard flops (local model and family case) and Mukai flops.
  We conclude with some speculation about Grassmannian flops and the Abuaf flop.
\end{abstract}


\maketitle


\tableofcontents


\section{Introduction}
Let $X$ be a scheme and consider $\derived^{\bounded}(X)$ its bounded derived category of coherent sheaves.
One of the gains in passing to the cohomological world is that we acquire new interesting \emph{symmetries}: $\Aut(X) \subsetneq \Aut(\derived^{\bounded}(X))$.
Describing the autoequivalence group of the derived category is a highly non-trivial task, and even the construction of autoequivalences that are not \emph{standard} requires some work, see e.g. \cite{Seidel-Thomas01}, \cite{Huyb-Thomas06}.

It has been clear for some time now that birational transformations play a prominent role in the construction of derived equivalences \cite{Bridgeland-flops-derived-categories}.
Of particular interest to us is the following definition: we say that two smooth, projective varieties $X_{\pm}$ are \emph{K-equivalent} if there exists a roof $X_{-} \xleftarrow{p} \widehat{X} \xrightarrow{q} X_{+}$ of birational maps such that $p^{\ast}\omega_{X_{-}} \simeq q^{\ast}\omega_{X_{+}}$.
Conjecturally, \cite{Bondal-Orlov-SOD-alg-var}, \cite{Kawamata-K-D-equivalence}, K-equivalent varieties should be derived equivalent: $\derived^{\bounded}(X_{-}) \simeq \derived^{\bounded}(X_{+})$.

If we believe in the validity of this conjecture, we get plenty of autoequivalences that go under the name of \emph{flop-flop autoequivalences}.
It often happens that pull-push through $\widehat{X}$ gives the desired equivalence, e.g. standard flops \cite{Bondal-Orlov-SOD-alg-var}.
However, this is not always the case, e.g. Mukai flops \cite{Namikawa-mukai-flops}, \cite{Kawamata-K-D-equivalence}.
Nevertheless, in many cases there exists \emph{some} roof $X_{-} \xleftarrow{p'} \widehat{X}' \xrightarrow{q'} X_{+}$ such that push-pull through $\widehat{X}'$ gives a derived equivalence.
The price we have to pay is that not only we might have to introduce singularities, but $\widehat{X}'$ might also be reducible, e.g. Mukai flops \cite{Namikawa-mukai-flops}.
In this paper we will be interested in a roof (not necessarily a K-equivalence and, in fact, $X_{\pm}$ will not even have to be smooth) such that pull-push via $\widehat{X}$ is an equivalence;
we will denote $\mathrm{FF}_{+} = q_{\ast} p^{\ast} p_{\ast} q^{\ast}$ and $\mathrm{FF}_{-} = p_{\ast}q^{\ast}q_{\ast}p^{\ast}$ the autoequivalences that arise, and we will call them \emph{flop-flop autoequivalences}

Many examples where Bondal--Orlov, Kawamata conjecture holds are known, e.g. \cite{Bondal-Orlov-SOD-alg-var}, \cite{Namikawa-mukai-flops}, \cite{Namikawa-stratified-mukai-flops}, \cite{Halpern-GIT-15}, \cite{Seg-Abuaf-Flop}, and the study of the resulting autoequivalences has produced examples of the \emph{flop-flop = (inverse) twist} phenomenon.
Namely, in many cases, e.g. \cite{Seg-GIT-LG}, \cite{Addington-Donovan-Meachan}, \cite{AL-P-n-func}, the autoequivalences $\mathrm{FF}_{\pm}$ can be described as the composition of inverses of spherical twists around spherical functors (see \cite{Anno-Log-17} of this beautiful piece of theory).
That \emph{any} autoequivalence can be realised as the inverse of the spherical twist around a spherical functor follows from \cite{Seg-autoeq-spherical-twist}, but one might still wonder whether there exists a \emph{natural} spherical functor whose inverse twist is $\mathrm{FF}_{\pm}$.

The first part of this paper is devoted to give a positive answer to this question.
As it turns out, there is no need to restrict to derived categories of coherent sheaves and we will work in the setup of enhanced, compactly generated, triangulated categories.\footnote{We will call such categories just triangulated from now on.}
We will consider categories admitting arbitrary direct sums (which corresponds to considering $\derived_{\qc}(-)$: the unbounded derived categories of sheaves with quasi coherent cohomologies); this passage is of technical nature but it is not just a formality, see \autoref{ex:where-bound-fails}.

Fix $\mcr{B}_{-} \xleftarrow{\alpha_{-}} \mcr{A} \xrightarrow{\alpha_{+}} \mcr{B}_{+}$ a roof of triangulated categories and functors such that $\alpha_{\pm}$ have fully faithful left adjoints $\alpha_{\pm}^L$.
Denote $\mcr{K} = \ker \alpha_{-} \cap \ker \alpha_{+}$ and $\overline{\alpha}_{\pm} \colon \mcr{A} / \mcr{K} \rightarrow \mcr{B}_{\pm}$ the induced functors.
The fundamental assumption that we make is the following: the functors
\[
	\begin{array}{lcr}
		\Phi_{+} = \alpha_{+} \alpha^L_{-} \colon \mcr{B}_{-} \rightarrow \mcr{B}_{+} & \mathrm{and} & \Phi_{-} = \alpha_{-} \alpha^L_{+} \colon \mcr{B}_{+} \rightarrow \mcr{B}_{-}
	\end{array}
\]
are equivalences.
We will call such a framework a \emph{flop-flop diagram} and we will call the autoequivalences $\Phi_{\pm}  \Phi_{\mp}$ the \emph{flop-flop autoequivalences}.

Recall that a functor $\Psi$ between triangulated categories is called conservative if $\ker \Psi = 0$.
We prove (for the relation between spherical functors and four periodic SODs see \cite{Halpern-Shipman16})

\begin{thm}[\autoref{thm:formal-statement-4-sods}, \autoref{thm:4-sods-quotient}, \autoref{thm:spherical-functor-from-sods}]
	\label{thm:introduction-thm-1}
	We have a four periodic SOD
	\[
		\mcr{A}/ \mcr{K} = \gen{\ker \overline{\alpha}_{-}, \mcr{B}_{-}} = \gen{\mcr{B}_{-}, \ker \overline{\alpha}_{+}} = \gen{\ker \overline{\alpha}_{+}, \mcr{B}_{+}} = \gen{\mcr{B}_{+}, \ker \overline{\alpha}_{-}}
	\]
	which is induced by a four periodic SOD of ${}^{\perp} \mcr{K} \subset \mcr{A}$.
	Furthermore, the functors $\overline{\Psi}_{\pm} : \ker \overline{\alpha}_{\pm} \xrightarrow{\overline{\alpha}_{\pm}} \mcr{B}_{\pm}$ are conservative spherical functors and the inverses of the twists around them are given by $\Phi_{\pm}  \Phi_{\mp}$.
\end{thm}

The above theorem can be applied to the case of a roof which induces an equivalence of derived categories.
Then, it tells us that the autoequivalences $\mathrm{FF}_{+}$ and $\mathrm{FF}_{-}$ can be realised as the inverses of the twists around the spherical functors $\overline{\Psi}_{+} \colon \ker \overline{p}_{\ast} \xrightarrow{\overline{q}_{\ast}} \derived_{\qc}(X_{+})$ and $\overline{\Psi}_{-} \colon \ker \overline{q}_{\ast} \xrightarrow{\overline{p}_{\ast}} \derived_{\qc}(X_{-})$, respectively.

Let us now assume that the maps $p$ and $q$ are proper and have finite Tor dimension, then pushforward and pullback along them induce functors between $\derived^{\bounded}(\widehat{X})$ and $\derived^{\bounded}(X_{\pm})$.
Therefore, it is natural to ask whether a theorem similar to the one above holds for $\derived^{\bounded}(\widehat{X}) / \mcr{K}^{\bounded}$, $\mcr{K}^{\bounded} = \mcr{K} \cap \derived^{\bounded}(\widehat{X})$.
The following is our second result;
notice that we are not making any assumption on the singularities of $X_{\pm}$ and $\widehat{X}$.

\begin{thm}[\autoref{thm:4-sods-bounded}]
	\label{thm:introduction-thm-2}
	Take a roof $X_{-} \xleftarrow{p} \widehat{X} \xrightarrow{q} X_{+}$ that induces a flop-flop diagram and assume that $p$ and $q$ are proper and of finite Tor dimension.
	Then, we have a four periodic SOD
	\[	
		\derived^{\bounded}(\widehat{X}) / \mcr{K}^{\bounded} =
		\gen{\ker \overline{p}_{\ast}, \derived^{\bounded}(X_{-})} = 
		\gen{\derived^{\bounded}(X_{+}), \ker \overline{q}_{\ast}} =
		\gen{\ker \overline{q}_{\ast}, \derived^{\bounded}(X_{+})} =
		\gen{\derived^{\bounded}(X_{-}), \ker \overline{p}_{\ast}} .
	\]
	Furthermore, the functors $\overline{\Psi}^{\bounded}_{+} : \ker \overline{p}_{\ast} \xrightarrow{\overline{q}_{\ast}} \derived^{\bounded}(X_{\-})$ and $\overline{\Psi}^{\bounded}_{-} : \ker \overline{q}_{\ast} \xrightarrow{\overline{p}_{\ast}} \derived^{\bounded}(X_{-})$ are conservative spherical functors and the inverses of the twists around them are given by $\mathrm{FF}_{+}$ and $\mathrm{FF}_{-}$, respectively.
\end{thm}

Let us remark that a version of the above theorem was proved in \cite{BodBon15};
Bodzenta and Bondal work in the context of flopping contractions with fibres of dimension at most one and give an explicit description of $\ker \overline{p}_{\ast}$ is terms of the null-category of the flopping contraction.
The paper \emph{ibidem} was the first to consider the quotient category $\derived^{\bounded}(\widehat{X}) / \mcr{K}^{\bounded}$ and the work that led to the current article started with the aim to generalise Bodzenta--Bondal's work to morphisms without any assumption on the dimensions of the fibres.

One of the consequences of assuming that the morphisms have fibres of dimension at most one is that the functor $\derived^{\bounded}(\widehat{X}) / \mcr{K}^{\bounded} \rightarrow \dk$ is fully faithful.
Our third result shows that, if $p$ and $q$ have fibres of dimensions at most one, then \autoref{thm:introduction-thm-2} is a consequence of \autoref{thm:introduction-thm-1} applied to $\derived_{\qc}(-)$.

\begin{thm}[\autoref{thm:relation-to-bounded-case}]
	Assume the same setup as in \autoref{thm:introduction-thm-2} and that $p$ and $q$ have fibres of dimension at most one.
	Then, the four periodic SOD
	\[	
		\derived_{\qc}(\widehat{X}) / \mcr{K} =
		\gen{\ker \overline{p}_{\ast}, \derived_{\qc}(X_{-})} = 
		\gen{\derived_{\qc}(X_{+}), \ker \overline{q}_{\ast}} =
		\gen{\ker \overline{q}_{\ast}, \derived_{\qc}(X_{+})} =
		\gen{\derived_{\qc}(X_{-}), \ker \overline{p}_{\ast}}
	\]
	restricts to give a four periodic SOD of $\derived^{\bounded}(\widehat{X}) / \mcr{K}^{\bounded}$.
\end{thm}

It is important to notice that one can bypass the category $\derived^{\bounded}(\widehat{X}) / \mcr{K}^{\bounded}$ when $X_{+}$ and $X_{-}$ are smooth.
Indeed, passing to compact objects in \autoref{thm:introduction-thm-1} we get a spherical functor $\overline{\Psi}_{\pm} : (\ker \overline{\alpha}_{\mp})^c \rightarrow \derived_{\qc}(X_{\pm})^c$ and under the smoothness assumption we have $\derived_{\qc}(X_{\pm})^c \simeq \derived^{\bounded}(X_{\pm})$.
This approach is better suited for computations because the four periodic SOD of \autoref{thm:introduction-thm-1} is induced by a four periodic SOD of ${}^{\perp} \mcr{K}$, and thus we can come back to $\derived_{\qc}(\widehat{X})$ to compute morphisms between objects.
It is not clear whether this is possible for $\derived^{\bounded}(\widehat{X}) / \mcr{K}^{\bounded}$ because we do not know whether $\mcr{K}^{\bounded}$ is left admissible in $\derived^{\bounded}(\widehat{X})$, see \autoref{rmk:where-boundedness-fails-II}.

\vspace{0.2cm}

After proving the above theorems, we move on to describe the category $\ker \overline{p}_{\ast} \subset \dk$ in some examples.
Describing the source category of a conservative spherical functor is interesting for at least two reasons;
first, because it can help us to understand the twist around the functor (e.g. SODs of the source category induce, under some assumptions, splittings of the twist, see \cite{Halpern-Shipman16});
second, because even though we know that any autoequivalence is a spherical twist, we do not always have nice descriptions of the spherical functor that realises the autoequivalence as a twist.

We will consider the examples of standard flops (both the local case and in families) and Mukai flops.
In all these examples a splitting of the flop-flop autoequivalence as the composition of inverses of spherical twists is known \cite{Addington-Donovan-Meachan} and thus we can guess what the source category $\ker \overline{p}_{\ast}$ should look like.
More precisely, in \cite{Barb-Spherical-twists} we proved that given spherical functors $\Psi_i : \mcr{C}_i \rightarrow \mcr{D}$, $i=1, \dots, n$, there exists a category $\mcr{C}$ and a spherical functor $\Psi : \mcr{C} \rightarrow \mcr{D}$ such that $T_{\Psi} \simeq T_{\Psi_n}  \dots T_{\Psi_1}$; furthermore, $\mcr{C} = \gen{\mcr{C}_n, \dots, \mcr{C}_1}$.
Hence, we expect the category $\ker \overline{p}_{\ast}$ to have a SOD reflecting the splitting of $\mathrm{FF}_{+}$ in all the examples that we study.
We will prove this expectation true as the following theorems show.

When we consider the local model for standard flops we have $X_{+} = \Tot( \ca{O}_{\mb{P}^n}(-1)^{\oplus n+1})$;
\cite{Addington-Donovan-Meachan} shows that in this case the structure sheaf $\ca{O}_{\mb{P}^n}$ is a spherical object \cite{Seidel-Thomas01} and that there is an isomorphism of functors $\mathrm{FF}_{+} \simeq T_{\ca{O}_{\mb{P}^n(-1)}}^{-1}  \dots  T_{\ca{O}_{\mb{P}^n(-n)}}^{-1}$.
We show\footnote{See e.g. \cite[Def. 2.1.3]{Barb-Spherical-twists} for the definition of right gluing functor.}

\begin{thm}[\autoref{thm:C=D(R)}, \autoref{cor:exceptional-coll-std-flops}]
	For the spherical functor arising from the local model for standard flops, \autoref{section:std-flops}, the category $\ker \overline{p}_{\ast}$ has a full, exceptional collection:
	\[
		\ker \overline{p}_{\ast} = \gen{\derived(k), \dots, \derived(k)}
	\]
	and the right gluing bimodule between the $j$-th and $i$-th component (counting right to left) is $\mathrm{RHom}_{X_{+}}(\ca{O}_{\mb{P}}(-j), \ca{O}_{\mb{P}}(-i))$.
\end{thm}

We prove a similar theorem for standard flops in families \autoref{thm:std-flop-family-version} but we refrain to give a statement here because it would require the introduction of too much notation.

We then move on to consider the example of Mukai flops.
This is the first instance where the roof $\widehat{X}$ inducing the equivalence is not smooth and therefore deserves careful consideration.
The variety we flop is given by $\Tot(\Omega^1_{\mb{P}^n})$ and in this case the structure sheaf $\ca{O}_{\mb{P}^n}$ is a $\mb{P}^n$-object \cite{Huyb-Thomas06}.
\cite{Addington-Donovan-Meachan} shows that $\mathrm{FF}_{+} \simeq P_{\ca{O}_{\mb{P}^n}(-1)}^{-1}  \dots  P_{\ca{O}_{\mb{P}^n}(-n)}^{-1}$ where $P_{\ca{O}_{\mb{P}^n}(-i)}$ is the $\mb{P}$-twist around $\ca{O}_{\mb{P}^n}(-i)$ as defined in \cite{Huyb-Thomas06}.
\cite[ 4]{Seg-autoeq-spherical-twist} gives two constructions to realise a $\mb{P}$-twist as a spherical twist and thus we have two possible choices for the source category of our spherical functor: either $\derived(k[q])$ with $\deg(q) = 2$, or $\derived(k[\varepsilon]/\varepsilon^2)$ with $\deg(\varepsilon) = -1$.
If we try to use the same generator we used for standard flops (which is what one would try at first), we fail because of the singularities of $\widehat{X}$: the object we take is a generator but it is not compact, see \autoref{section:Mukai-flops}.
We can however turn our attention to another generator and prove

\begin{thm}[\autoref{thm:C=D(R)-Mukai-complete}]
	For the spherical functor arising from Mukai flops, \autoref{section:Mukai-flops}, we have
	\[
		\begin{array}{lcr}
			\ker \overline{p}_{\ast} = \gen{\derived(k[\varepsilon]/\varepsilon^2), \dots, \derived(k[\varepsilon]/\varepsilon^2)} & & \deg(\varepsilon) = -1
		\end{array}
	\]
	and the right gluing bimodule between the $j$-th and $i$-th component (counting right to left) is $\mathrm{RHom}_{X_{+}}(\{ \ca{O}_{\mb{P}}(-j)[-1] \rightarrow \ca{O}_{\mb{P}}(-j)[1] \}, \{ \ca{O}_{\mb{P}}(-i)[-1] \rightarrow \ca{O}_{\mb{P}}(-i)[1] \})$ where $\{ \ca{O}_{\mb{P}}(-j)[-1] \rightarrow \ca{O}_{\mb{P}}(-j)[1] \}$ is the canonical degree $2$ extension.
\end{thm}

Using the above theorem we can prove the following statement, which tells us that the only problem in the first generator we considered was indeed the lack of compactness.

\begin{thm}[\autoref{thm:end-algebra-structure-sheaves-mukai}]
	For the spherical functor arising from Mukai flops, \autoref{section:Mukai-flops}, there exists a thick subcategory $\mcr{C} \subset \ker \overline{p}_{\ast}$ such that
	\[
		\begin{array}{lcr}
			\mcr{C} = \gen{\derived(k[q])^c, \dots, \derived(k[q])^c} & & \deg(q) = 2
		\end{array}
	\]
	and the right gluing bimodule between the $j$-th and $i$-th component (counting right to left) is $\mathrm{RHom}_{X_{+}}(\ca{O}_{\mb{P}}(-j), \ca{O}_{\mb{P}}(-i))$.
\end{thm}

We conclude with a couple of examples where we can describe $\mathrm{FF}_{+}$ but an explicit description of $\ker \overline{p}_{\ast}$ eludes our understanding.

\addtocontents{toc}{\SkipTocEntry}
\subsection*{Acknowledgments}

I would like to thank my advisor Ed Segal for many helpful conversations.
I would also like to thank Will Donovan for reading a draft of this paper and providing many suggestions, and Agnieszka Bodzenta for explaining to me some technicalities of \cite{BodBon15}.
This project has received funding from the European Research Council (ERC) under the European Union Horizon 2020 research and innovation programme (grant agreement No.725010).



\addtocontents{toc}{\SkipTocEntry}
\subsection*{Notations and Conventions}
In section\autoref{sect:flop-flop-inverse} $k$ will denote a fixed field; in section\autoref{sect:examples} $k$ will be an algebraically closed and of characteristic zero.

We denote $k$-linear triangulated categories as $\mcr{A}$, $\mcr{B}$, $\mcr{T}$, etc., and we denote $\Hom_{\#}^{\bullet}(-,-) = \oplus_{n \in \mb{Z}} \Hom_{\#}(-,-[n])[-n]$, $\End^{\bullet}_{\#}(-) = \Hom^{\bullet}_{\#}(-,-)$.
For a subcategory $\mcr{S} \subset \mcr{T}$ the right orthogonal is defined as $\mcr{S}^{\perp} = \{ E \in \mcr{T} : \Hom^{\bullet}_{\mcr{T}}(S,E) = 0 \; \forall S \in \mcr{S} \}$, and similarly for the left orthogonal. 
A triangulated category admitting arbitrary direct sums will be called \emph{cocomplete} and a functor commuting with arbitrary direct sums will be called \emph{continuous}.
A subcategory $\mcr{S}$ of a cocomplete category $\mcr{T}$ will be called a \emph{cocomplete subcategory} if it is abstractly cocomplete and the inclusion $\mcr{S} \subset \mcr{T}$ is continuous.
Notice that not all subcategories which are abstractly cocomplete are cocomplete subcategories according to this definition, see \cite[Remark 2.1.6]{Barb-Spherical-twists}.
An object $E \in \mcr{T}$ is called compact if the functor $\Hom_{\mcr{T}}(E,-)$ is continuous.
The subcategory of compact objects is denoted $\mcr{T}^c$.
A set of objects $\{E_i\} \subset \mcr{T}$ is called a set of compact generators if $E_i \in \mcr{T}^c$ for every $i$ and if, given $F \in \mcr{T}$, $\Hom^{\bullet}_{\mcr{T}}(E_i, F) = 0$ for every $i$ implies $F \simeq 0$.
For simplicity of exposition we will assume that our triangulated categories are compactly generated by a single compact object but all the arguments work if we have a set of compact generators.
Given a set of objects $\{E_i \} \subset \mcr{T}$ we will denote $\cgen{\{E_i\}}$ (resp. $\tgen{\{E_i\}}$) the smallest cocomplete (resp. thick, \emph{i.e.}, closed under taking direct summands) subcategory containing $\{E_i\}$.

We always assume our triangulated categories to be enhanced (in whichever framework one prefers) and we assume functors between them to be enhanced.

When we work with spherical functors we use the definition of \cite{Anno-Log-17}.
For a spherical functor $\Psi \colon \mcr{C} \rightarrow \mcr{D}$ we denote $\Psi^L$ and $\Psi^R$ its left and right adjoint, respectively, and we set $T_{\Psi} = \cone(\Psi \Psi^R \rightarrow \id)$.

Given an abelian category $\ca{A}$, objects $E_i \in \ca{A}$, and maps $f_i \colon E_i \rightarrow E_{i+1}$ such that $f_{i+1} \circ f_i = 0$, we will denote the complex associated to this data as $\left\{ \dots \xrightarrow{f_{i-1}} E_i \xrightarrow{f_{i}} E_{i+1} \xrightarrow{f_{i+1}} \dots \right\}$ with $E_i$ in degree $i$.
If the number of $E_i$'s appearing is finite, the rightmost term appearing is placed in degree $0$.

When we consider functors between derived categories, we assume them to be implicitly derived.

\section{Flop-flop = (inverse) twist}
\label{sect:flop-flop-inverse}
\subsection{General case}
\label{sect:general-case-result}
Let
\begin{equation}
	\label{eqn:std-diagram}
	\mcr{B}_{-} \xleftarrow{\alpha_{-}} \mcr{A} \xrightarrow{\alpha_{+}} \mcr{B}_{+}
\end{equation}
be a diagram of cocomplete triangulated categories and continuous functors.
Assume that $\alpha_{\pm}$ have fully faithful left adjoints $\alpha_{\pm}^L$ and that the functors
\begin{equation}
	\begin{array}{lcr}
		\Phi_{+} = \alpha_{+} \alpha^L_{-} \colon \mcr{B}_{-} \rightarrow \mcr{B}_{+} & \mathrm{and} & \Phi_{-} = \alpha_{-} \alpha^L_{+} \colon \mcr{B}_{+} \rightarrow \mcr{B}_{-}
	\end{array}
\end{equation}
are equivalences.
In the following, we will call the autoequivalences $\Phi_{\pm}  \Phi_{\mp}$ arising from such a configuration the flop-flop autoequivalences.

\begin{rmk}
	The first place where a setup similar to the above appeared is \cite{BodBon15}.
	There the authors consider $f_{-} : X_{-} \rightarrow Y$ a morphism satisfying certain assumptions (among which there is $(f_{-})_{\ast}\ca{O}_{X_{-}} \simeq \ca{O}_Y$ and that the fibres of $f_{-}$ must have at most dimension $1$), take $f_{+} : X_{+} \rightarrow Y$ a flop of $f_{-}$, and set $\mcr{B}_{\pm} = \derived^{\bounded}(X_{\pm})$, $\mcr{A} = \derived^{\bounded}(X_{-} \times_Y X_{+})$, and $\alpha_{\pm} = (f_{\pm})_{\ast}$.
	In \emph{ibidem} the authors also prove the statement analogous to \autoref{thm:4-sods-quotient} for their setup.
\end{rmk}

The fully faithfulness assumption implies that the inclusions $\ker \alpha_{\pm} \hookrightarrow \mcr{A}$ have left adjoints given by $\pi_{\pm} = \cone(\alpha^L_{\pm} \alpha_{\pm} \rightarrow \id)$.
Denote $\mcr{S}_{\pm} := \cgen{ \im (\pi_{\mp}  \alpha^{L}_{\pm})}$ and $\mcr{K} := \ker \alpha_{-} \cap \ker \alpha_{+}$ where $\im(-)$ and $\ker -$ denote the essential image and the kernel, respectively.

In the following, when we embed $\mcr{B}_{\pm}$ in $\mcr{A}$ we write $\mcr{B}_{\pm} \subset \mcr{A}$ in place of $\alpha^L_{\pm} \mcr{B}_{\pm} \subset \mcr{A}$.
For the definition of SOD and right/left admissibility see \cite{BonKap89}, \cite{Bondal-Orlov-SOD-alg-var}.

\begin{thm}
	\label{thm:formal-statement-4-sods}
	Let $\mcr{B}_{-} \xleftarrow{\alpha_{-}} \mcr{A} \xrightarrow{\alpha_{+}} \mcr{B}_{+}$ be a diagram as in \eqref{eqn:std-diagram}.
	Then, $\mcr{K}$ is left admissible in $\mcr{A}$ and we have a four periodic SOD
	\begin{equation}
		\label{eqn:4-sod-k}
		{}^{\perp}\mcr{K} = \gen{\mcr{S}_{+}, \mcr{B}_{-}} = \gen{\mcr{B}_{-}, \mcr{S}_{-}} = \gen{\mcr{S}_{-}, \mcr{B}_{+}} = \gen{\mcr{B}_{+}, \mcr{S}_{-}}.
	\end{equation}
\end{thm}

\begin{proof}
	It follows from \autoref{prop:SODs} and \autoref{prop:SODs-reversed} below.
\end{proof}

Before beginning the proof, let us state a few consequences of the above theorem.
Let us denote $\mcr{A}/\mcr{K}$ the quotient category, $\overline{\alpha}_{\pm} : \mcr{A} / \mcr{K} \rightarrow \mcr{B}_{\pm}$ the induced functors, and similarly $\overline{\alpha}^{L}_{\pm} = Q  \alpha^{L}_{\pm}$ where $Q : \mcr{A} \rightarrow \mcr{A} / \mcr{K}$ is the quotient functor.

\begin{thm}
	\label{thm:4-sods-quotient}
	The category $\mcr{A} / \mcr{K}$ has a four periodic SOD
	\begin{equation}
		\label{eqn:4-sod-quotient}
		\mcr{A} / \mcr{K} = \gen{\ker \overline{\alpha}_{-}, \mcr{B}_{-}} = \gen{\mcr{B}_{-}, \ker \overline{\alpha}_{+}} = \gen{\ker \overline{\alpha}_{+}, \mcr{B}_{+}} = \gen{\mcr{B}_{+}, \ker \overline{\alpha}_{-}}.
	\end{equation}
	Futhermore, the quotient functor induces equivalences $Q: \mcr{S}_{\mp} \rightarrow \ker \overline{\alpha}_{\pm}$.
\end{thm}

\begin{proof}
	Notice that we have SODs $\mcr{A} = \gen{\ker \alpha_{\pm}, \mcr{B}_{\pm}}$ because of the fully faithfulness of $\alpha_{\pm}^L$.
	Hence, \autoref{thm:formal-statement-4-sods} tells us $\ker \alpha_{\pm} = \gen{\mcr{K}, \mcr{S}_{\mp}}$, and the quotient functor induces an equivalence as claimed.
	Then, the four periodic SOD follows from the four periodic SOD of \autoref{thm:formal-statement-4-sods}.
\end{proof}

\begin{prop}
	\label{prop:formal-statement-4-sods-compact}
	The four periodic SODs \eqref{eqn:4-sod-k} and \eqref{eqn:4-sod-quotient} induce four periodic SODs of the subcategory of compact objects.
\end{prop}

\begin{proof}
	This is a completely general statement: if we have a four periodic SOD
	\begin{equation}
		\label{eqn:4-sod-general}
		\mcr{C} = \gen{\mcr{C}_1, \mcr{C}_2} = \gen{\mcr{C}_2, \mcr{C}_3} = \gen{\mcr{C}_3, \mcr{C}_4} = \gen{\mcr{C}_4, \mcr{C}_1}
	\end{equation}
	in terms of cocomplete subcategories, then it induces a four periodic SOD of compact objects, \emph{i.e.},
	\[
		\mcr{C}^c = \gen{\mcr{C}_1^c, \mcr{C}_2^c} = \gen{\mcr{C}_2^c, \mcr{C}_3^c} = \gen{\mcr{C}_3^c, \mcr{C}_4^c} = \gen{\mcr{C}_4^c, \mcr{C}_1^c}.
	\]
	Indeed, the fourth SOD in \eqref{eqn:4-sod-general} tells us that the inclusion $i_{1} \colon \mcr{C}_1 \hookrightarrow \mcr{C}$ has a right adjoint $i_1^R$, and therefore the first SOD has a right gluing functor\footnote{See e.g. \cite[Def. 2.1.3]{Barb-Spherical-twists}.} $i_1^R i_2$.
	As we are assuming that the inclusions $i_j : \mcr{C}_j \hookrightarrow \mcr{C}$ are continuous, the distinguished triangle $i_{1}i_{1}^R \rightarrow \id \rightarrow i_{4}i_4^{L}$ tells us that $i_1^R$ is continuous.
	Hence, $i_1^R i_2$ is continuous, and therefore we get $\mcr{C}^c = \gen{\mcr{C}_1^c, \mcr{C}_2^c}$ by \cite[Lemma 2.1.9]{Barb-Flop-flop}.
	The other SODs can be proven similarly.
\end{proof}

\begin{thm}
	\label{thm:spherical-functor-from-sods}
	The functors $\Psi_{\pm} = \alpha_{\pm}  i_{\mcr{S}_{\pm}} \colon \mcr{S}_{\pm} \rightarrow \mcr{B}_{\pm}$ are conservative spherical functors and the inverses of the twists around them are given by $\Phi_{\pm}  \Phi_{\mp}$.
	The same statement holds for the functors they induce on the quotient $\overline{\Psi}_{\pm} \colon \ker \overline{\alpha}_{\mp} \rightarrow \mcr{B}_{\pm}$.
	Furthermore, the functors $\Psi'_{\pm} = \alpha_{\pm}  i_{\ker \alpha_{\pm}} \colon \ker \alpha_{\pm} \rightarrow \mcr{B}_{\pm}$ are spherical and are given by the gluing of $\Psi_{\pm}$ with the zero functor.
\end{thm}

\begin{proof}
	The first two statements follow from \autoref{thm:formal-statement-4-sods}, \autoref{prop:formal-statement-4-sods-compact}, and \cite[Proposition B.3]{BodBon15}.
	The third follows from the SODs $\ker \alpha_{\pm} = \gen{\mcr{K}, \mcr{S}_{\mp}}$ and \cite[Theorem 3.0.1]{Barb-Spherical-twists}.
\end{proof}

\begin{rmk}
	Notice that the left adjoints to $\Psi_{\pm}$ (resp. $\overline{\Psi}_{\pm}$) are given by $\Psi^L_{\pm} = \pi_{\mp}  \alpha^{L}_{\pm}$ (resp. $\overline{\Psi}^L_{\pm} = Q  \pi_{\mp}  \alpha^{L}_{\pm}$).
\end{rmk}

We now begin the proof of \autoref{thm:formal-statement-4-sods}.

\begin{lem}
	\label{lem:compact-generation-s}
	For any compact generator $G \in \mcr{B}_{\pm}$ the object $\pi_{\mp}\alpha^L_{\pm}G \in \mcr{S}_{\pm}$ is a compact generator.
\end{lem}

\begin{proof}
	We prove the statement for $\mcr{S}_{+}$, the other being analogous.
	Set $G' := \pi_{-}\alpha^{L}_{+}G$; then $G'$ is compact because $\alpha^L_{+}$ and $\pi_{-}$ are left adjoints, and we clearly have $\cgen{G'} \subset \mcr{S}_{+}$.
	By \cite[\href{https://stacks.math.columbia.edu/tag/09SN}{Tag 09SN}]{stacks-project}, as $G$ is a compact generator, every object $E \in \mcr{B}_{+}$ can be written as the homotopy colimit of a system $\left\{ E_{n} \right\}$ where each $E_{n}$ is the iterated cone of maps between direct sums of shifts of $G$.
	As $\pi_{-} \alpha^L_{+}$ commutes with homotopy colimits and $\mcr{S}_{+}$ is cocomplete, we get $\pi_{-}\alpha^L_{+}E \in \mcr{S}_{+}$.
	This implies $\mcr{S}_{+} \subset \cgen{G'}$ and the claim follows.
\end{proof}

\begin{lem}
	\label{lem:right-adjoint-inclusion}
	The inclusions $i_{\mcr{S}_{\pm}} : \mcr{S}_{\pm} \hookrightarrow \mcr{A}$ have continuous right adjoints $i_{\mcr{S}_{\pm}}^R$.
\end{lem}

\begin{proof}
	We prove the statement for $\mcr{S}_{+}$.
	The existence of the right adjoint follows from Brown representability \cite{Neeman-Brown-Rep} using compact generation \autoref{lem:compact-generation-s}.
	With the same notation as in the proof of the above lemma, as we have $\mcr{S}_{+} = \cgen{\pi_{-}\alpha^L_{+}G}$, by \cite[\href{https://stacks.math.columbia.edu/tag/09SR}{Tag 09SR}]{stacks-project} we get $\mcr{S}_{+}^{c} = \tgen{\pi_{-}\alpha^L_{+}G}$.
	Now notice that $\pi_{-}\alpha^{L}_{+}G$ is compact in $\mcr{A}$ because $\Phi_{-}$ is an equivalence and $\alpha_{\pm}^L$ preserve compactness being left adjoints.
	Thus, $i_{\mcr{S}_{+}}$ preserves compactness;
	hence, its right adjoint is continuous.
\end{proof}

\begin{prop}
	\label{prop:SODs}
	The category $\mcr{K}$ is left admissible and we have SODs
	\[
		\mcr{A} = \gen{\mcr{K}, \mcr{S}_{+}, \mcr{B}_{-}} = \gen{\mcr{K}, \mcr{S}_{-}, \mcr{B}_{+}}.
	\]
\end{prop}

\begin{proof}
	The existence of either of the SODs implies that $\mcr{K}$ is left admissible.
	We prove the existence of the first SOD.
	As $\mcr{S}_{+}$ and $\mcr{B}_{-}$ are right admissible and $\mcr{S}_{+} \subset \mcr{B}_{-}^{\perp}$, we have $\mcr{A} = \gen{\mcr{S}_{+}^{\perp} \cap  \mcr{B}_{-}^{\perp}, \mcr{S}_{+}, \mcr{B}_{-}}$.
	Hence, it is enough to prove $\mcr{S}_{+}^{\perp} \cap  \mcr{B}_{-}^{\perp} = \mcr{K}$.

	It is clear that $\mcr{K} \subset \mcr{S}_{+}^{\perp} \cap  \mcr{B}_{-}^{\perp}$.
	Conversely, assume $E \in \mcr{S}_{+}^{\perp} \cap  \mcr{B}_{-}^{\perp}$.
	We have $\alpha_{-}E \simeq 0$;
	moreover, as $\mcr{S}_{+}^{\perp} \cap  \mcr{B}_{-}^{\perp} \subset \ker \alpha_{-}$ and $\pi_{-}$ is the left adjoint of $\ker \alpha_{-} \hookrightarrow \mcr{A}$, we have
	\[
		0 \simeq \Hom^{\bullet}_{\mcr{A}}(\pi_{-}\alpha^L_{+}G, E) \simeq \Hom^{\bullet}_{\mcr{B}_{+}}(G, \alpha_{+}E),
	\]
	where $G \in \mcr{B}_{+}$ is a compact generator.
	This implies $\alpha_{+}E \simeq 0$, and therefore $E \in \mcr{K}$.
\end{proof}

\begin{lem}
	\label{lem:orthogonality-reversed}
	We have $\mcr{B}_{\pm} \subset \mcr{S}_{\pm}^{\perp}$.
\end{lem}

\begin{proof}
	We prove $\mcr{B}_{+} \subset \mcr{S}_{+}^{\perp}$, the other inclusion being similar.
	Take $E,F \in \mcr{B}_{+}$; then
	\[
		\Hom^{\bullet}_{\mcr{A}}(\pi_{-}\alpha_{+}^L E, \alpha^L_{+} F) \simeq
		\cone \left( \Hom^{\bullet}_{\mcr{B}_{+}}(E, F) \rightarrow \Hom^{\bullet}_{\mcr{B}_{-}}(\Phi_{-} E, \Phi_{-}F) \right)[-1] \simeq 0
	\]
	where we used the fully faithfulness of $\alpha_{+}^L$ and that $\Phi_{-}$ is an equivalence by assumption.
\end{proof}

\begin{prop}
	\label{prop:SODs-reversed}
	We have SODs
	\[
		\mcr{A} = \gen{\mcr{K}, \mcr{B}_{-}, \mcr{S}_{-}} = \gen{\mcr{K}, \mcr{B}_{+}, \mcr{S}_{+}}.
	\]
\end{prop}

\begin{proof}
	We show the existence of the first SOD.
	By \autoref{lem:orthogonality-reversed} and the right admissibility of $\mcr{B}_{-}$ and $\mcr{S}_{-}$, we have $\mcr{A} = \gen{\mcr{B}_{-}^{\perp} \cap \mcr{S}_{-}^{\perp}, \mcr{B}_{-}, \mcr{S}_{-}}$.
	Hence, it is enough to prove $\mcr{B}_{-}^{\perp} \cap \mcr{S}_{-}^{\perp} = \mcr{K}$.

	We clearly have $\mcr{K} \subset \mcr{B}_{-}^{\perp} \cap \mcr{S}_{-}^{\perp}$.
	Conversely, assume $E \in \mcr{B}_{-}^{\perp} \cap \mcr{S}_{-}^{\perp}$.
	We have $\alpha_{-}E \simeq 0$;
	moreover, for $G \in \mcr{B}_{-}$ a compact generator we have
	\[
		0 \simeq \Hom^{\bullet}_{\mcr{A}}(\pi_{+} \alpha_{-}^L G, E) \simeq \cone \left( 0 \rightarrow \Hom^{\bullet}_{\mcr{B}_{+}}(\Phi_{+}G, \alpha_{+}E) \right)[-1] \simeq \Hom^{\bullet}_{\mcr{B}_{+}}(\Phi_{+}G, \alpha_{+}E)[-1].
	\]
	As $\Phi_{+}$ is an equivalence $\Phi_{+}G$ is a compact generator, and therefore $\alpha_{+}E \simeq 0$.
	Thus, $E \in \mcr{K}$ and the claim follows.
\end{proof}

\subsection{Bounded derived categories}
\label{subsect:bounded-derived-categories}

A typical example of diagram \eqref{eqn:std-diagram} is the following: take $X_{-}$, $X_{+}$, and $\widehat{X}$ three separated, finite type schemes of finite Krull dimension together with finite type maps $X_{-} \xleftarrow{p} \widehat{X} \xrightarrow{q} X_{+}$ and set $\mcr{B}_{\pm} = \derived_{\qc}(X_{\pm})$, $\alpha_{-} = p_{\ast}$, and $\alpha_{+} = q_{\ast}$.
The assumptions of \autoref{sect:general-case-result} are then equivalent to:
\begin{equation}
	\label{eqn:ass-pushforward}
	\begin{array}{lcr}
		p_{\ast} \ca{O}_{\widehat{X}} \simeq \ca{O}_{X_{-}} & & q_{\ast} \ca{O}_{\widehat{X}} \simeq \ca{O}_{X_{+}}
	\end{array}
\end{equation}
and
\begin{equation}
	\label{eqn:ass-equivalences}
	\begin{array}{lcr}
		q_{\ast}p^{\ast} \colon \derived_{\qc}(X_{-}) \xrightarrow{\simeq} \derived_{\qc}(X_{+}) & & p_{\ast}q^{\ast} \colon \derived_{\qc}(X_{+}) \xrightarrow{\simeq} \derived_{\qc}(X_{-}).
	\end{array}
\end{equation}

Let us now assume that $p$ and $q$ are proper and of finite Tor dimension.
Then $p_{\ast}$, $p^{\ast}$, $q_{\ast}$, and $q^{\ast}$ preserve boundedness and coherence, and therefore we wonder whether four periodic SODs similar to \eqref{eqn:4-sod-k} and \eqref{eqn:4-sod-quotient} exist for bounded derived categories.

Let us denote $\derived^{\bounded}(-)$ the bounded derived category of coherent sheaves.
As $p$ and $q$ are proper and of finite Tor dimension, the right adjoint $p^{\times} : \derived_{\qc}(X_{-}) \rightarrow \derived_{\qc}(X_{+})$ (resp. $q^{\times}$) to $p_{\ast}$ (resp. $q_{\ast}$) preserves boundedness and coherence, see \cite[Lemma 3.12]{Neeman-Grothend-Hochschild}.
Moreover, \cite[Remark 6.1.1]{Neeman-Grothendieck-duality} together with \eqref{eqn:ass-pushforward} imply that $p^{\times}$ and $q^{\times}$ are fully faithful.

From now on, all the functors we write are between bounded derived categories.
Let us denote $\mcr{K}^{\bounded} = \mcr{K} \cap \derived^{\bounded}(\widehat{X})$, $\overline{p}_{\ast} \colon \derived^{\bounded}(\widehat{X}) /\mcr{K}^{\bounded} \rightarrow \derived^{\bounded}(X_{-})$ the functor induced on the quotient, $\overline{p}^{\ast} = Q  p^{\ast}$, $\overline{p}^{\times} = Q  p^{\times}$ and similarly for $q$.
By \cite[Lemma 5.7]{BodBon15} the fully faithfulness of $p^{\ast}$ implies the fully faithfulness of $\overline{p}_{\ast}$ and the adjunction $p^{\ast} \dashv p_{\ast}$ induces an adjunction $\overline{p}^{\ast} \dashv \overline{p}_{\ast}$.
The following lemma shows that the same holds true for $\overline{p}^{\times}$ and the adjunction $\overline{p}_{\ast} \dashv \overline{p}^{\times}$.

\begin{lem}
	\label{lem:morphism-quotient-when-right-orthogonal}
	Let $\mcr{T}$ be a triangulated category and $\mcr{S} \subset \mcr{T}$ be a thick subcategory.
	Let $E \in \mcr{S}^{\perp}$ and denote $Q: \mcr{T} \rightarrow \mcr{T} \left/ \mcr{S} \right.$ the quotient functor.
	Then for any $F \in \mcr{T}$ we have
	\[
		\Hom_{\mcr{T} \left/ \mcr{S} \right.}(Q(F), Q(E)) \simeq \Hom_{\mcr{T}}(F,E)
	\]
\end{lem}

\begin{proof}
	This can be proven similarly to \cite[Lemma 5.7]{BodBon15}.
\end{proof}

Therefore, we have the following SODs of $\derived^{\bounded}(\widehat{X}) /\mcr{K}^{\bounded}$:
\begin{equation}
	\label{eqn:4-sods-bounded}
	\gen{\ker \overline{p}_{\ast}, \overline{p}^{\ast} \derived^{\bounded}(X_{-})} = 
	\gen{ \overline{q}^{\times}\derived^{\bounded}(X_{+}), \ker \overline{q}_{\ast}} =
	\gen{\ker \overline{q}_{\ast}, \overline{q}^{\ast} \derived^{\bounded}(X_{+})} =
	\gen{\overline{p}^{\times} \derived^{\bounded}(X_{-}), \ker \overline{p}_{\ast}}.
\end{equation}

\begin{thm}
	\label{thm:4-sods-bounded}
	Let $X_{-} \xleftarrow{p} \widehat{X} \xrightarrow{q} X_{+}$ as above.
	Then, we have a four periodic SOD
	\[	
		\derived^{\bounded}(\widehat{X}) /\mcr{K}^{\bounded} =
		\gen{\ker \overline{p}_{\ast}, \derived^{\bounded}(X_{-})} = 
		\gen{\derived^{\bounded}(X_{+}), \ker \overline{q}_{\ast}} =
		\gen{\ker \overline{q}_{\ast}, \derived^{\bounded}(X_{+})} =
		\gen{\derived^{\bounded}(X_{-}), \ker \overline{p}_{\ast}} .
	\]
	where $\derived^{\bounded}(X_{-})$ and $\derived^{\bounded}(X_{+})$ are embedded via $\overline{p}^{\ast}$ and $\overline{q}^{\ast}$, respectively.
\end{thm}

\begin{proof}
	We will prove that $\overline{p}^{\ast} \derived^{\bounded}(X_{-}) = \overline{q}^{\times} \derived^{\bounded}(X_{+})$ and $\overline{q}^{\ast} \derived^{\bounded}(X_{+}) =  \overline{p}^{\times} \derived^{\bounded}(X_{-})$ as subcategories of $\derived^{\bounded}(\widehat{X}) /\mcr{K}^{\bounded}$.
	Then, the statement will follow from \eqref{eqn:4-sods-bounded}.

	We prove the first equality, the second being analogous.
	By \autoref{thm:4-sods-quotient} we know that $\overline{p}^{\ast} \derived_{\qc}(X_{-}) = \overline{q}^{\times}\derived_{\qc}(X_{+})$ as subcategories of $\dk$.
	Hence, for every $E \in \derived^{\bounded}(X_{-})$ there exists $F \in \derived_{\qc}(X_{+})$ and an isomorphism $\overline{p}^{\ast} E \simeq \overline{q}^{\times}F$ in $\dk$.
	Applying $\overline{q}_{\ast}$, as $\overline{q}^{\times}$ is fully faithful, we get $F \simeq \overline{q}_{\ast}\overline{p}^{\ast}E \in \derived^{\bounded}(X_{+})$.
	By \cite[Lemma 5.7]{BodBon15} the isomorphism $\overline{p}^{\ast} E \simeq \overline{q}^{\times}F$ comes from a morphism $\beta : p^{\ast} E \rightarrow q^{\times} F$ such that $\cone(\beta) \in \mcr{K}$.
	However, $p^{\ast}E$ and $q^{\times}F$ are bounded complexes with coherent cohomologies, thus $\cone(\beta)$ is too.
	This means that $Q(\beta)$ is an isomorphism in $\derived^{\bounded}(\widehat{X}) /\mcr{K}^{\bounded}$.
	Hence, we have $\overline{p}^{\ast} \derived^{\bounded}(X_{-}) \subset \overline{q}^{\times} \derived^{\bounded}(X_{+})$, and similarly one proves the other containment.
\end{proof}

\begin{cor}
	Let $X_{-} \xleftarrow{p} \widehat{X} \xrightarrow{q} X_{+}$ as above.
	Then, $\Psi_{-} = \overline{p}_{\ast} \vert_{\ker \overline{q}_{\ast}} \colon \ker \overline{q}_{\ast} \rightarrow \derived^{\bounded}(X_{-})$ and $\Psi_{+} = \overline{q}_{\ast} \vert_{\ker \overline{p}_{\ast}} \colon \ker \overline{p}_{\ast} \rightarrow \derived^{\bounded}(X_{+})$ are conservative spherical functors whose inverse twists are given by $p_{\ast} q^{\ast} q_{\ast} p^{\ast}$ and $q_{\ast}p^{\ast}p_{\ast}q^{\ast}$, respectively.
\end{cor}

\begin{rmk}
	If $X_{-}$ and $X_{+}$ are smooth, then $\derived^{\bounded}(X_{\pm}) = \derived_{\qc}(X_{\pm})^c$.
	Therefore, regardless of putting further assumptions on $p$ and $q$, we can always realise the flop-flop autoequivalence of $\derived^{\bounded}(X_{\pm})$ associated to the equivalences \eqref{eqn:ass-equivalences} by restricting the spherical functors of \autoref{thm:spherical-functor-from-sods} to compact objects.
	This approach, when pursuable, is better suited for computations for two reasons.
	First, because it is easier to compute morphisms in $\derived_{\qc}(\widehat{X})$ rather than in $\derived^{\bounded}(\widehat{X}) /\mcr{K}^{\bounded}$.
	Second, because we have generators for $\mcr{S}_{\pm}$, whereas we do not have them in general for $\ker \overline{p}_{\ast} \subset \derived^{\bounded}(\widehat{X}) /\mcr{K}^{\bounded}$, see \autoref{ex:where-bound-fails}. 
\end{rmk}

\subsection{Fibres of dimension at most one}
\label{subsect:rel-dim-one}

We now want to compare our work to \cite{BodBon15}.
Hence, we keep using the setup of \autoref{subsect:bounded-derived-categories} and we further assume that the dimension of the fibres of $p$ and $q$ is bounded above by $1$.

The following lemma is well-known.

\begin{lem}[$\text{\cite[Lemma 3.1]{Bridgeland-flops-derived-categories}}$]
	An object $K \in \derived_{\qc}(\widehat{X})$ is in $\mcr{K}$ if and only if its cohomology sheaves are.
\end{lem}

\begin{proof}
	Our assumptions on $p$ and $q$ imply, by \cite[\href{https://stacks.math.columbia.edu/tag/08D5}{Tag 08D5}]{stacks-project}, that to compute $\ca{H}^i(p_{\ast}E)$ and $\ca{H}^i(q_{\ast}E)$ we can assume $E \in \derived^{+}_{\qc}(\widehat{X})$.
	Then, we have a convergent, second page spectral sequence $\ca{H}^i(p_{\ast}\ca{H}^j(E)) \implies \ca{H}^{i+j}(p_{\ast}E)$ that degenerates at page $2$ and the statement follows.
\end{proof}

Using the above lemma, with a proof similar to the one of \cite[Lemma 5.5]{BodBon15}, we get that $\derived^{\bounded}(\widehat{X}) /\mcr{K}^{\bounded}$ is a full subcategory of $\dk$.
We now prove the following theorem which says that \autoref{thm:4-sods-quotient} and \autoref{thm:4-sods-bounded} produce the same SODs when $p$ and $q$ have fibres of dimension at most $1$.

\begin{thm}
	\label{thm:relation-to-bounded-case}
	Assume that $p$ and $q$ have fibres of dimension at most $1$.
	Then, the SODs of \autoref{thm:4-sods-quotient} restrict to SODs of $\derived^{\bounded}(\widehat{X}) /\mcr{K}^{\bounded}$.
\end{thm}

\begin{proof}
	It is enough to prove that the right adjoint $i^{R}$ to the inclusion $i \colon \ker \overline{p}_{\ast} \hookrightarrow \dk$ preserves boundedness, and similarly for the right adjoint to $\ker \overline{q}_{\ast} \hookrightarrow \dk$.

	Let $E = Q(F) \in \derived^{\bounded}(\widehat{X}) /\mcr{K}^{\bounded}$.
	Then, by \autoref{lem:morphism-quotient-when-right-orthogonal} the exact triangle $i i^R E \rightarrow E \rightarrow \overline{p}^{\times} \overline{p}_{\ast} E$ associated to the SOD $\derived_{\qc}(\widehat{X})/\mcr{K} = \gen{\overline{p}^{\times} \derived_{\qc}(X_{-}), \ker \overline{p}_{\ast}}$ is the image via $Q$ of the triangle $F' \rightarrow F \rightarrow p^{\times} p_{\ast} F$.
	As $F, p^{\times} p_{\ast} F \in \derived^{\bounded}(\widehat{X})$, we have $i i^R E \in \derived^{\bounded}(\widehat{X}) /\mcr{K}^{\bounded}$, and the claim follows.
	The stament for $\ker \overline{q}_{\ast}$ is proved similarly.
\end{proof}

\begin{rmk}
	\label{rmk:Bod-Bond}
	In \cite{BodBon15} the authors consider a cartesian diagram
	\[
	\begin{tikzcd}
		\widehat{X} \ar[d, "p"'] \ar[r, "q"] & X_{+} \ar[d, "f_{+}"]\\
		X_{-} \ar[r, "f_{-}"] & Y
	\end{tikzcd}
	\]
	where $f_{-}$ is subject to various assumptions (among which there are $(f_{-})_{\ast} \ca{O}_{X_{-}} \simeq \ca{O}_Y$ and that the fibres of $f_{-}$ have dimension at most $1$), and $f_{+}$ is the flop of $f_{-}$.

	Under these assumption $X_{-}$ has a tilting bundle $M$ such that there exists a distinguished triangle, [\emph{ibidem}, Lemma 4.4, Proposition 4.12, Lemma 5.2],
	\begin{equation*}
		q^{\ast}q_{\ast}p^{\ast}M  \rightarrow p^{\ast} M \rightarrow \ca{H}^{0}(p^{\ast} P)[1].
	\end{equation*}
	Here $P$ is the projection of $M$ to the category $\ca{A}_{f_{-}} = \{ E \in \text{Coh}(X_{-})  : (f_{-})_{\ast} E = 0 \}$, and it is a projective generator for $\ca{A}_{f_{-}}$.
	Furthermore, Bodzenta and Bondal prove that
	\[
		\Hom^{\bullet}_{\dk}(\ca{H}^{0}(p^{\ast} P), \ca{H}^{0}(p^{\ast} P) ) \simeq \mathrm{RHom}_{\ca{A}_{f_{-}}}(P,P) =: A_{P},
	\]
	which is an algebra as $P$ is projective.
	Thus, by \autoref{lem:compact-generation-s} and \autoref{thm:4-sods-quotient} we get that $\ker \overline{p}_{\ast}$ is compactly generated by $\ca{H}^{0}(p^{\ast} P)[1]$ and $\derived(A_{P}) \simeq \ker \overline{p}_{\ast}$.
	By \autoref{thm:relation-to-bounded-case} and results in \cite{BodBon15}, restricting to $\derived^{\bounded}(\widehat{X}) /\mcr{K}^{\bounded}$ we get $\ker \overline{p}_{\ast} \cap \derived^{\bounded}(\widehat{X}) /\mcr{K}^{\bounded} \simeq \derived^{\bounded}(\ca{A}_{f_{-}})$.

	Under the futher assumption that $Y = \text{Spec} \, R$, $R$ a complete local $k$-algebra, Bodzenta and Bondal prove that $A_{P} \simeq A_{\mathrm{con}}$, the contraction algebra as defined in \cite{Donovan-Wemyss-Contraction-algebra}.
	Therefore, in this case we get $\ker \overline{p}_{\ast} \simeq \derived(A_{\mathrm{con}})$. 
\end{rmk}

\section{Examples}
\label{sect:examples}
We now work through some examples where we can describe the category $\ker \overline{p}_{\ast}$ explicitly.
From now on $k$ denotes an algebraically closed field of characteristic zero.

We will apply \autoref{thm:spherical-functor-from-sods} in the following setup: we consider a roof $X_{-} \xleftarrow{p} \widehat{X} \xrightarrow{q} X_{+}$ of separated, finite type schemes of finite Krull dimension together with finite type maps; furthermore, we assume $p_{\ast} \ca{O}_{\widehat{X}} \simeq \ca{O}_{X_{-}}$, $q_{\ast} \ca{O}_{\widehat{X}} \simeq \ca{O}_{X_{+}}$, and that the functors $p_{\ast} q^{\ast}$, $q_{\ast}p^{\ast}$ are equivalences between $\derived_{\qc}(X_{\pm})$ and $\derived_{\qc}(X_{\mp})$.
Then, diagram \eqref{eqn:std-diagram} is given by $\derived_{\qc}(X_{-}) \xleftarrow{p_{\ast}} \derived_{\qc}(\widehat{X}) \xrightarrow{q_{\ast}} \derived_{\qc}(X_{+})$.

\subsection{Standard flops (local model)}
\label{section:std-flops}
Let $X_{-} = X_{+} = \Tot( \ca{O}_{\mb{P}^{n}}(-1)^{\oplus n+1})$ and consider
\[
	R = H^0(X_{\pm}, \ca{O}_{X_{\pm}} ) = \bigoplus_{r \geq 0} H^0(\mb{P}^n, \text{Sym}^r \ca{O}_{\mb{P}^n}(1)^{\oplus n+1}).
\]
Then, if we set $Y = \text{Spec}(R)$ and $\widehat{X} = \Tot(\ca{O}_{\mb{P}^n \times \mb{P}^n}(-1,-1)) \simeq \Bl_{\mb{P}^n}(X_{\pm})$, we have a cartesian diagram
\begin{equation}
\label{eqn:cartesian-diagram}
	\begin{tikzcd}
		\widehat{X} \ar[d, "p"'] \ar[r, "q"] & X_{+} \ar[d, "f_{+}"]\\
		X_{-} \ar[r, "f_{-}"] & Y
	\end{tikzcd}
\end{equation}
where $f_{-}$ and $f_{+}$ are the affinization maps.
To ease the notation we will denote $\mb{P}^n = \mb{P}$.

Applying \cite[Lemma 2.12]{KL15} to the equivalences of \cite{Bondal-Orlov-SOD-alg-var} we get that $p_{\ast} q^{\ast}$ and $q_{\ast} p^{\ast}$ induce equivalences between $\derived_{\qc}(X_{\pm})$ and $\derived_{\qc}(X_{\mp})$.
Therefore, we can apply\footnote{Notice that $p$ and $q$ are also of finite Tor dimension because $X_{\pm}$ are smooth, and they are proper as they are blow-ups of finite type ideals.} \autoref{thm:spherical-functor-from-sods} and we get the conservative spherical functor $\overline{\Psi}_{+} : \ker \overline{p}_{\ast} \rightarrow \dqc$ whose inverse twist is the autoequivalence $q_{\ast}p^{\ast}p_{\ast}q^{\ast}$.

Our aim is now to give an explicit description of $\ker \overline{p}_{\ast}$.
Before going into the formal argument, let us make an educated guess.
In \cite{Addington-Donovan-Meachan} the authors prove that the object $\ca{O}_{\mb{P}} \in \derived_{\qc}(X_{+})$ is spherical \cite[Def. 1.1]{Seidel-Thomas01} and that we have an isomorphism of functors $q_{\ast} p^{\ast} p_{\ast} q^{\ast} \simeq T_{\ca{O}_{\mb{P}}(-1)}^{-1}  T_{\ca{O}_{\mb{P}}(-2)}^{-1}\dots  T_{\ca{O}_{\mb{P}}(-n)}^{-1}$.
Considering the gluing procedure defined in \cite{Barb-Spherical-twists}, we would expect $\ker \overline{p}_{\ast}$ to have a full, exceptional collection consisting of $n$ elements such that the image of the $i$-th exceptional object (counting right to left in the SOD) under $\overline{\Psi}_{+}$ is given by $\ca{O}_{\mb{P}}(-i)$.
We will indeed prove this guess true, see \autoref{thm:C=D(R)} and \autoref{cor:exceptional-coll-std-flops}.

\begin{dfn}
	Given a category $\mcr{A}$ and objects $a_1, \dots, a_k \in \mcr{A}$ we define the algebra $\Hom_{\mcr{A}}^{\rightarrow}(\left\{a_j\right\})$ as the sub-algebra of $\Hom_{\mcr{A}}(\oplus_{j=1}^k a_j, \oplus_{j=1}^k a_j)$ generated by the identities and the morphisms going from $a_i$ to $a_j$ for $i > j$.
\end{dfn}

\begin{rmk}
	If $\mcr{A}$ is a dg-category, then $\Hom_{\mcr{A}}^{\rightarrow}(\left\{a_j\right\})$ is a sub-dg-algebra and it enjoys the following property
	\begin{equation}
		\label{property-sub-dgas}
		\begin{array}{lcr}
			d(s) \in \Hom_{\mcr{A}}^{\rightarrow}(\left\{a_j\right\}) \iff s \in \Hom_{\mcr{A}}^{\rightarrow}(\left\{a_j\right\}) & & \forall s \in \Hom_{\mcr{A}}(\oplus_{j=1}^k a_j, \oplus_{j=1}^k a_j)
		\end{array}
	\end{equation}
	where $d(-)$ is the differential.
	As a consequence, we get an inclusion $H^{\bullet}(\Hom_{\mcr{A}}^{\rightarrow}(\left\{a_j\right\})) \hookrightarrow H^{\bullet}(\Hom_{\mcr{A}}(\oplus_{j=1}^k a_j, \oplus_{j=1}^k a_j))$.
\end{rmk}

Let us denote $\inj_{X_{+}}$ the dg-category of h-injective complexes on $X_{+}$, see \cite{Resol-unbounded-complexes-Spaltenstein}.
Then, we have

\begin{thm}
	\label{thm:C=D(R)}
	There exist h-injective resolutions $I_1, \dots, I_n$ of $\ca{O}_{\mb{P}}(-1), \dots, \ca{O}_{\mb{P}}(-n)$ such that we have an equivalence $\ker \overline{p}_{\ast} \simeq \derived (\Hom_{\inj_{X_{+}}}^{\rightarrow}(\left\{I_j\right\}))$ and under this equivalence the spherical functor $\overline{\Psi}_{+}$ is identified with $- \stackrel{L}{\otimes}_{\Hom_{\inj_{X_{+}}}^{\rightarrow}(\left\{I_j\right\})} I_n \oplus \dots \oplus I_{1}$.
\end{thm}

\begin{rmk}
	Notice that $\gen{\ca{O}_{\mb{P} \times \mb{P}}(0,-1) \oplus \dots \oplus \ca{O}_{\mb{P} \times \mb{P}}(0,-n)} \subset \derived_{\qc}(\widehat{X})$ is not the correct source category for the spherical functor realising the flop-flop autoequivalence except in the case $n = 1$.
	Indeed, if we take $n=2$, then $\Hom^{\bullet}_{\derived_{\qc}(\widehat{X})}(\ca{O}_{\mb{P}\times \mb{P}}(0,-2), \ca{O}_{\mb{P}\times \mb{P}}(0,-1)) \simeq k^{3}$ but what we should get considering \cite{Barb-Spherical-twists} is
	\[
		\Hom^{\bullet}_{\derived_{\qc}(X_{+})}(\ca{O}_{\mb{P}}(-2), \ca{O}_{\mb{P}}(-1)) \simeq k^{3} \oplus k^{3}[-1].
	\]
\end{rmk}

\begin{proof}
	\label{proof:thm-std-flops}
	By \autoref{lem:cpt-gen-std-flops} the object $Q(\oplus_{j=1}^n \ca{O}_{\mb{P} \times \mb{P}}(0,-j))$ is a compact generator for $\ker \overline{p}_{\ast}$.
	Then, by \autoref{thm:4-sods-quotient} we get that 
	\begin{equation}
		\label{eqn:proof-std-flops-algebra-iR-gen}
		\End^{\bullet}_{\derived_{\qc}(\widehat{X})}(i_{\mcr{S}_{+}}^R (\oplus_{j=1}^n \ca{O}_{\mb{P} \times \mb{P}}(0,-j) )) \simeq \End^{\bullet}_{\dk}(Q(\oplus_{j=1}^n \ca{O}_{\mb{P} \times \mb{P}}(0,-j))).
	\end{equation}

 	Fix compact generators $E \in \derived_{\qc}(X_+)$ and $F \in \derived_{\qc}(\widehat{X})$, choose h-injective resolutions $I_E$ and $I_F$, and denote $R_E$ and $R_F$ their endomorphism dg-algebras.
	By \cite[Proposition 1.17]{Lunts-Orlov-Uniqueness-enhancement} we get equivalences $\Xi_1 \colon \derived(R_E) \xrightarrow{\simeq} \derived_{\qc}(X_{+})$, $\Xi_2 \colon \derived(R_F) \xrightarrow{\simeq} \derived_{\qc}(\widehat{X})$.

	Then, set $\ca{P}(R_E)$ and $\ca{P}(R_F)$ to be the dg-categories of h-projective dg-modules over $R_E$ and $R_F$, respectively.
	As it is explained in \cite[Example 4.2]{Anno-Log-17}, the functor $q_{\ast} \colon \derived_{\qc}(\widehat{X}) \rightarrow \derived_{\qc}(X_{+})$ admits a lift to a quasi-functor between $\ca{P}(R_F)$ and $\ca{P}(R_E)$.
	What this means is that if we fix a cofibrant replacement $\ca{A}$ for $\ca{P}(R_F)$ in the model category structure of $\mathrm{dgcat}_{k}$ defined in \cite{Tabuada-DGCAT}, then we can construct a diagram of dg-categories
	\begin{equation}
		\label{eqn:roof-standard-flops}
		\ca{P}(R_F) \overset{\alpha}{\longleftarrow} \ca{A} \overset{\beta}{\longrightarrow} \ca{P}(R_E)
	\end{equation}
	such that $\alpha$ is a quasi-equivalence, \emph{i.e.}, it induces quasi-isomorphisms between the complexes of morphisms and $H^{0}(\alpha)$ is essentially surjective, and
	\begin{equation}
		\label{isom-dg-lift}
		\Xi_1  \circ H^0(\beta) \circ H^0(\alpha)^{-1} \circ \Xi_2^{-1} \simeq q_{\ast}.
	\end{equation}

	Take $A_i \in \ca{A}$ an object such that $\Xi_2 ( H^0(\alpha)(A_i) ) \simeq i_{\mcr{S}_{+}}^R(\ca{O}_{\mb{P} \times \mb{P}}(0,-i))$ and set $A = \bigoplus_{i=1}^n A_i$.
	Then, we get a diagram of dg-algebras and morphism of dg-algebras
	\begin{equation}
		\label{eqn:diagram-dga-proof-standard-flops}
		\Hom_{\ca{P}(R_F)}(\alpha(A), \alpha(A)) \overset{\alpha}{\longleftarrow} \Hom_{\ca{A}}(A,A) \overset{\beta}{\longrightarrow} \Hom_{\ca{P}(R_E)}(\beta(A), \beta(A)),
	\end{equation}
	where $\alpha$ is a quasi-isomorphism.
	Notice that by the choice of the $A_i$ and \eqref{eqn:proof-std-flops-algebra-iR-gen} we have
	\begin{equation}
		\label{eqn:algebras-a}
		H^{\bullet}(\Hom_{\ca{P}(R_F)}(\alpha(A),\alpha(A))) \simeq H^{\bullet}(\Hom_{\ca{A}}(A,A)) \simeq \End^{\bullet}_{\dk}(\oplus_{i=1}^{n} Q(\ca{O}_{\mb{P} \times \mb{P}}(0,-i))).
	\end{equation}
	Thus, by \eqref{property-sub-dgas} and \autoref{lem:end-algebra-std-flops} we get that the inclusions
	\[
		\begin{array}{lcr}
			\Hom_{\ca{P}(R_F)}^{\rightarrow}(\{ \alpha(A_i)\}) \hookrightarrow \Hom_{\ca{P}(R_F)}(\alpha(A), \alpha(A)) & \mathrm{and} & \Hom_{\ca{A}}^{\rightarrow}(\{A_i\}) \hookrightarrow \Hom_{\ca{A}}(A,A)
		\end{array} 
	\]
	are quasi-isomorphisms.
	Moreover, diagram \eqref{eqn:diagram-dga-proof-standard-flops} gives rise to a diagram
	\[
		\Hom_{\ca{A}}^{\rightarrow}(\{ \alpha(A_i)\}) \overset{\alpha}{\longleftarrow} \Hom_{\ca{A}}^{\rightarrow}(\{A_i\}) \overset{\tilde{\beta}}{\longrightarrow} \Hom_{\ca{P}(R_E)}^{\rightarrow}(\{\beta(A_i)\}) \overset{\iota}{\hookrightarrow} \Hom_{\ca{P}(R_E)}(\beta(A), \beta(A))
	\]
	where $\alpha$ is a quasi-isomorphism, $\beta = \iota  \tilde{\beta}$, and $\tilde{\beta}$ is a quasi-isomorphism by \eqref{eqn:algebras-a} and \autoref{lem:end-algebra-std-flops} because by \eqref{isom-dg-lift} we have $\Xi_1 (H^0(\beta)(A_i)) \simeq \ca{O}_{\mb{P}}(-i)$.

	To conclude, recall that, as it is explained in \cite[Remark 1.1]{Lunts-Orlov-Uniqueness-enhancement}, the equivalence $\Xi_2$ comes from a quasi-functor $\xi \colon \inj_{X_{+}} \rightarrow \ca{S}\ca{F}(R_E)$ where $\ca{S}\ca{F}(R_E)$ is the dg-category of \emph{semifree} dg-modules.
	Moreover, recall that the inclusion $\ca{S} \ca{F}(R_E) \hookrightarrow \ca{P}(R_E)$ is a quasi-equivalence, see \cite[ C.8]{Drinfeld-DG-quotients}.
	Fixing a cofibrant replacement of $\ca{P}(R_E)$ and proceeding as we did above, we can realise the quasi-functor $\xi$ as a roof of dg-functors similar to \eqref{eqn:roof-standard-flops}, the only difference being that now both arrows in the diagram are quasi-equivalences.
	Hence, we see that there exist h-injective resolutions $I_j$'s of the $\ca{O}_{\mb{P}}(-j)$'s such that we have a quasi-isomorphism $\Hom_{\ca{P}(R_E)}^{\rightarrow}(\{\beta(A_i)\}) \underset{\mathrm{qis}}{\simeq} \Hom_{\inj_{X_{+}}}^{\rightarrow}(\{ I_j\})$.

	Putting together all the quasi-isomorphisms we constructed, we get an equivalence $\ker \overline{p}_{\ast} \simeq \derived (\Hom_{\inj_{X_{+}}}^{\rightarrow}(\{ I_j\}))$.
	Hence, to conclude we only have to prove the description of the spherical functor.
	The statement follows because the dg-module $\Hom_{\inj_{X_{+}}}^{\rightarrow}(\{ I_j\})$ is mapped to $I_n \oplus \dots \oplus I_1$.
\end{proof}

\begin{cor}
	\label{cor:exceptional-coll-std-flops}
	The category $\ker \overline{p}_{\ast}$ has a full, exceptional collection of length $n$, \emph{i.e.},
	\[
		\ker \overline{p}_{\ast} = \gen{\derived(k), \dots, \derived(k)}.
	\]
	Furthermore, the right gluing bimodule between the $j$-th and the $i$-th component (counting right to left) is $\mathrm{RHom}_{X_{+}}(\ca{O}_{\mb{P}}(-j), \ca{O}_{\mb{P}}(-i))$.
\end{cor}

\begin{proof}
	Both statements follow from \cite[Prop. 4.6]{KL15} considering \cite[Remark 2.6.5]{Barb-Spherical-twists} because the dg-algebra in \autoref{thm:C=D(R)} is upper triangular.
\end{proof}

\subsection{Standard flops in families}
\label{section:std-flops-families}
We now want to generalise the previous example.
Until now, we have put ourselves in a local situation.
Namely, we had the total space of a vector bundle and $\mb{P} = \mb{P}^n$ was the zero section.
We have two ways to generalise this setup.
First, we can take $X_{-}$ to be any variety.
Then, $\mb{P}$ would be embedded as a closed subscheme such that $N_{\mb{P}/X_{-}} = \ca{O}_{\mb{P}}(-1)^{n+1}$.
Second, we could consider
\begin{equation}
\label{eqn:standard-flop-families}
	\begin{tikzcd}
		\mb{P}V_{-} \ar[d, "\omega_{-}"] \ar[r, hook, "j_{-}"] & X_{-}\\
		Z,
	\end{tikzcd}
\end{equation}
where $Z$ is a smooth complex variety, $V_{-}$ is a vector bundle or rank $n+1$ over $Z$, and $j_{-}$ is a closed embedding with normal bundle $N_{\mb{P}V_{-}/X_{-}} = \ca{O}_{\mb{P}V_{-}}(-1) \otimes \omega_{-}^{\ast}V_{+}$ for some vector bundle $V_{+}$ of rank $n+1$ over $Z$.\footnote{The first generalisation is the special case $Z = \mathrm{pt}$. Notice that when $Z$ is general we have a family version of the case $Z = \mathrm{pt}$: we are simultaneously flopping a family of projective spaces over the base $Z$.}
Then, one can flop $X_{-}$ along $j_{-}(\mb{P}V)$, see \cite{Huy06}, obtaining the following
\[
	\begin{tikzcd}
		{} & {} & \ar[lldd, "p"'] \widehat{X} \ar[rrdd, "q"]& {} & {}\\
		{} & {} & \ar[dl, "\eta_{-}"] E = \mb{P}V_{-} \times_Z \mb{P}V_{+} \ar[dr, "\eta_{+}"'] \ar[u, "i", hook] & {} & {}\\
		X_{-} & \ar[l, "j_{-}",hook'] \mb{P}V_{-} \ar[rd, "\omega_{-}"] & {} & {} \ar[dl, "\omega_{+}"'] \mb{P}V_{+} \ar[r, "j_{+}"', hook] & X_{+}\\
		{} & {} & Z. & {} & {}
	\end{tikzcd}
\]
The hypotheses of \autoref{thm:spherical-functor-from-sods} are satisfied\footnote{Notice that we have more: as in the local case, $p$ and $q$ are of finite tor dimension as $X_{\pm}$ are smooth, and they are proper as they are blow-ups in finite type ideals.} because we can apply \cite[Lemma 2.12]{KL15} and deduce the equivalence $\derived_{\qc}(X_{-}) \simeq \derived_{\qc}(X_{+})$ (via the flop functors) from the equivalences between the bounded derived categories of coherent sheaves.

Consider the fully faithful functors
\[
	\begin{array}{c}
		I_{a,b}(-) := i_{\ast} \left( \ca{O}_{E}(a,b) \otimes \eta_{-}^{\ast} \omega_{-}^{\ast}(-) \right) \colon \derived_{\qc}(Z) \rightarrow \derived_{\qc}(\widehat{X})\\
		\Phi_{k}(-) := i_{\ast} \left( \ca{O}_E(kE) \otimes \eta_{-}^{\ast}(-) \right) \colon \derived_{\qc}(\mb{P}V_{-}) \rightarrow \derived_{\qc}(\widehat{X})
	\end{array}.
\]
Notice that by Grothendieck--Verdier duality $I_{a,b}$ has a left adjoint given by
\[
	I_{a,b}^L(-) = (\omega_{-})_{\ast} (\eta_{-})_{\ast} \left( \omega_{E/Z}[\dim E - \dim Z] \otimes \left( \ca{O}_{E}(-a,-b) \otimes i^{\ast}(-) \right) \right)
\]
where $\omega_{E/Z}$ is computed relative to the map $\eta_{-}  \omega_{-} = \eta_{+}  \omega_{+}$.

In \cite{Addington-Donovan-Meachan}, the authors prove that we have an isomorphism of functors $q_{\ast}p^{\ast}p_{\ast}q^{\ast} \simeq T^{-1}_{J_{-1}}  \dots  T^{-1}_{J_{-n}}$, where $J_{-i} : \derived_{\qc}(Z) \rightarrow \derived_{\qc}(X_{+})$ is the spherical functor $J_{-i}(F) = (j_{+})_{\ast}( \ca{O}_{\mb{P}V_{+}}(-i) \otimes \omega^{\ast}_{+}F)$.
Similarly to the local case, we will prove that $\ker \overline{p}_{\ast}$ is the gluing of $n$ copies of $\derived_{\qc}(Z)$ along the functors $J_{-i}$.

\begin{rmk}
	In \cite{Addington-Donovan-Meachan}, at the end of  2, the authors point out that the flop-flop functor should fit into the framework of \cite[Theorem 3.11]{Halpern-Shipman16}.
	Our four periodic SOD implements this framework.
\end{rmk}

Recall the definition of the right mutation along $\im(I_{a,b})$ as an endofunctor of $\derived_{\qc}(\widehat{X})$: $\mb{R}_{a,b} = \cone\left( \id \rightarrow I_{a,b} I^L_{a,b} \right)[-1]$.

The blow-up formula together with \cite[Lemma 3.20]{Halpern-GIT-15} tell us that we have a SOD
\[
	\derived_{\qc}(\widehat{X}) = \langle \im(\Phi_{n}), \dots, \im(\Phi_{2}), \im(\Phi_{1}), p^{\ast} \derived_{\qc}(X_{-}) \rangle.
\]
Moreover, noticing that $\ca{O}_{E}(kE) \simeq \ca{O}_{\mb{P}V_{-} \times_{Z} \mb{P}V_{+}}(-k,-k)$, we can use Orlov's semiorthogonal decomposition of projective bundles to get that
\[
	\im(\Phi_{k}) = \langle \im(I_{-k+a, -k}), \dots, \im(I_{-k+n+a, -k})\rangle \quad a \in \mathbb{Z}.
\]
Putting these two things together, we get
\begin{equation}
	\label{SOD-roof-non-mutated}
		\derived_{\qc}(\widehat{X}) =
		\langle
		\im(I_{-n,-n}), \dots, \im(I_{0,-n}),
		\dots,
		\im(I_{-n,-1}), \dots, \im(I_{0,-1}),
		p^{\ast}\derived_{\qc}(X_{-})
		\rangle
\end{equation}
Set $\mb{R}_{0} = \id$ and for $k = -1, \dots, -n+1$ define the functors
\[
		\mb{R}_{k} := \mb{R}_{-1,-1}  \mb{R}_{-2,-1}  \dots  \mb{R}_{-n,-1}  \dots  \mb{R}_{-1,k}  \mb{R}_{-2,k}  \dots  \mb{R}_{-n,k}.
\]
Then, we have the SOD (read each block top to bottom, left to right)
\begin{equation}
	\label{SOD-roof-families}
	\derived_{\qc}(\widehat{X}) = \langle 
		\underbrace{\begin{array}{l}
		\im(I_{-n,-n}), \dots, \im(I_{-1,-n}),\\
		\im(I_{-n,-n+1}), \dots \im(I_{-1,-n+1}),\\
		\dots\\
		\im(I_{-n,-1}), \dots, \im(I_{-1,-1}),
		\end{array}}_{\mcr{A}}
		\underbrace{\begin{array}{l}
		\mb{R}_{-n+1}\im(I_{0,-n}),\\
		\mb{R}_{-n+2} \im(I_{0,-n+1}),\\
		\dots,\\
		\mb{R}_{-1} \im(I_{0,-2}),\\
		\im(I_{0,-1}),
		\end{array}}_{\mcr{B}}
		p^{\ast}\derived_{\qc}(X_{-})
	\rangle.
\end{equation}

\begin{thm}
	\label{thm:std-flop-family-version}
	We have a SOD induced by the quotient functor $Q : \derived_{\qc}(\widehat{X}) \rightarrow \dk$
	\[
		\begin{tikzcd}
			\langle \mb{R}_{-n+1}\im(I_{0,-n}),
			\mb{R}_{-n+2} \im(I_{0,-n+1}),
			\dots,
			\mb{R}_{-1} \im(I_{0,-2}),
			\im(I_{0,-1}) \rangle \ar[r, "Q", "\simeq"'] & \ker \overline{p}_{\ast}
		\end{tikzcd}
	\]
\end{thm}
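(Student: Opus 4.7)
The plan is to deduce the stated decomposition as a direct transport of a pre-existing semiorthogonal decomposition of $\mcr{C}$ (living inside $D_{qc}(\widehat{X})$) along the equivalence $Q\vert_{\mcr{C}} : \mcr{C} \xrightarrow{\sim} \overline{\mcr{C}}$. The input we already have on the table is twofold: on the one hand, the SOD \eqref{SOD-roof-families} of $D_{qc}(\widehat{X})$ splits the ambient category into three blocks $\mcr{A}$, $\mcr{B}$, and $p^{\ast}D_{qc}(X_{-})$, where the middle block $\mcr{B}$ is itself presented via the explicit list of mutated subcategories appearing in the statement; on the other hand, \autoref{lem:K-in-SOD} identifies $\mcr{A}$ with $\mcr{K}$ and $\mcr{B}$ with $\mcr{C}$.

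First, I would observe that from \eqref{SOD-roof-families} one immediately reads off a semiorthogonal decomposition of the triangulated subcategory $\mcr{B} \subset D_{qc}(\widehat{X})$ (the subcategory generated by the listed middle pieces is precisely the gluing of those pieces, with semiorthogonality inherited from the ambient SOD). Combined with the identification $\mcr{B} = \mcr{C}$ from \autoref{lem:K-in-SOD}, this gives
\[
    \mcr{C} = \langle \mb{R}_{-n+1}Im(I_{0,-n}), \mb{R}_{-n+2} Im(I_{0,-n+1}), \dots, \mb{R}_{-1} Im(I_{0,-2}), Im(I_{0,-1}) \rangle
\]
as a SOD inside $D_{qc}(\widehat{X})$.

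Second, I would invoke the first SOD of \autoref{thm:recollection}, which says $\langle \mcr{K}, \mcr{C}, p^{\ast} D_{qc}(X_{-})\rangle \sod D_{qc}(\widehat{X})$. Composing with the quotient $Q : D_{qc}(\widehat{X}) \to \dk$ kills the $\mcr{K}$-block and yields an equivalence $Q\vert_{\gen{\mcr{C}, p^{\ast}D_{qc}(X_{-})}} \simeq \dk$; restricting further gives an equivalence of triangulated categories $Q\vert_{\mcr{C}} : \mcr{C} \xrightarrow{\sim} \overline{\mcr{C}}$ (this is exactly \autoref{cor:c-as-kerp}). Transporting the SOD of $\mcr{C}$ along this equivalence produces the claimed SOD of $\overline{\mcr{C}}$: semiorthogonality is preserved because $Q\vert_{\mcr{C}}$ is fully faithful, and the generation statement is preserved because it is an equivalence.

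There is essentially no genuine obstacle here once \autoref{lem:K-in-SOD} is in hand; the only mildly delicate point is checking that the decomposition of $\mcr{B}$ really is a SOD in its own right and not just a list of semiorthogonal pieces inside the ambient category, but this is automatic because each mutated image $\mb{R}_{k} Im(I_{0,k-1})$ is, by construction, contained in $\mcr{B}$, and the projection functors of the ambient SOD restrict to projection functors for $\mcr{B}$. Hence the heart of the argument has already been done in \autoref{lem:K-in-SOD}, and what remains is purely formal.
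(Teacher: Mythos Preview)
Your proposal is correct and follows essentially the same route as the paper's proof, which is simply the one-line ``This follows from SOD \eqref{SOD-roof-families} and the above lemma.'' You have just unpacked that sentence: the SOD \eqref{SOD-roof-families} together with \autoref{lem:K-in-SOD} yields the SOD $\langle \mcr{K}, \mcr{C}, p^{\ast}D_{qc}(X_{-})\rangle$ with $\mcr{C}$ carrying the listed internal SOD, and then the equivalence $Q\vert_{\mcr{C}}:\mcr{C}\xrightarrow{\sim}\overline{\mcr{C}}$ transports it.
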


\begin{proof}
	As by \autoref{lem:K-in-SOD-appendix} $\mcr{A} = \mcr{K}$, this follows from \autoref{thm:4-sods-quotient} and SOD \eqref{SOD-roof-families}.
\end{proof}

\begin{ex}
	When $Z = \mathrm{pt}$ and $X_{\pm} = \Tot(\ca{O}_{\mb{P}}(-1)^{\oplus n+1})$ the SOD of \autoref{thm:std-flop-family-version} recovers the SOD of \autoref{section:std-flops}.
	Take $n=2$ for simplicity; then
	\[
		\renewcommand{\arraystretch}{1.5}
		\begin{array}{c}
			\mcr{K} = \mcr{A} = \gen{\ca{O}_{\mb{P} \times \mb{P}}(-2,-2), \ca{O}_{\mb{P} \times \mb{P}}(-1,-2), \ca{O}_{\mb{P} \times \mb{P}}(-2,-1), \ca{O}_{\mb{P} \times \mb{P}}(-1,-1)},\\
			\mcr{B} = \gen{\mb{R}_{-1}\ca{O}_{\mb{P} \times \mb{P}}(0,-2), \ca{O}_{\mb{P} \times \mb{P}}(0,-1)},
		\end{array}
	\]
	and a simple computation shows 
	\[
		\mb{R}_{-1}\ca{O}_{\mb{P} \times \mb{P}}(0,-2) \simeq
		\{
			\ca{O}_{\mb{P} \times \mb{P}}(0,-1)^{\oplus 3} \rightarrow \ca{O}_{\widehat{X}}(0,-2) \otimes \ca{O}_{\widehat{X}} \left/ \ca{I}_{\mb{P} \times \mb{P}}^{\, 2} \right.
		\}.
	\]
	Thus, we recover the resolution we find in \autoref{lem:resolution}.
\end{ex}

As we want to obtain the description of $\ker \overline{p}_{\ast}$ as a gluing of copies of $\derived_{\qc}(Z)$, we have to compute natural transformations between the various functors $\mb{R}_{-j+1}  I_{0,-j}$.

In approaching such a question we have to be careful because technicalities about dg-enhancements and dg-lifts of functors play a fundamental role.
For this reason, as all the functors we are interested in are Fourier--Mukai functors, instead of speaking of natural transformation between the functors, we speak about morphisms between the kernels.

Let us denote the Fourier--Mukai kernel for $I_{a,b}$ as $\ca{I}_{a,b}$, the one for $\mb{R}_{-j}$ as $\ca{R}_{-j}$, and the one for $J_{-i}$ as $\ca{J}_{-i}$.
For simplicity, we will denote $\ca{R}_{-j+1}\ca{I}_{0,-j}$ the kernel for $\mb{R}_{-j+1}  I_{0,-j}$.

The desired description of $\ker \overline{p}_{\ast}$ is a consequence of the following

\begin{prop}
	The functors $Q  \mb{R}_{-i+1}  I_{0,-i} \colon \derived_{\qc}(Z) \rightarrow \dk$ are fully faithful for $1 \leq i \leq n$.
	Moreover, for $1 \leq j < i \leq n$ we have an isomorphism 
	\[
		\mathrm{RHom}_{Z \times \widehat{X}}(\ca{R}_{-i+1} \ca{I}_{0,-i}, \ca{R}_{-j+1} \ca{I}_{0,-j}) \xrightarrow{(\id \times q)_{\ast}} \mathrm{RHom}_{Z \times X_{+}}(\ca{J}_{-i}, \ca{J}_{-j}).
	\]
\end{prop}

\begin{proof}
	The statement about fully faithfulness is equivalent to the fully faithfulness of $\mb{R}_{-i+1}  I_{0,-i}$ by \autoref{thm:std-flop-family-version}.
	However, as the mutations $\mb{R}_{a,b}$ are equivalences from $\im(I_{a,b})^{\perp}$ to ${}^{\perp}\im(I_{a,b})$, the statement reduces to the fully faithfulness of $I_{0,-i}$, which is true.
	For the isomorphism, see \autoref{lem:natural-transf-family-case-appendix}.
\end{proof}

\subsection{Mukai flops}
\label{section:Mukai-flops}
We keep using the notation introduced in \autoref{section:std-flops}.
We consider the function $f_{-}( {\bf x}, {\bf t} ) = \sum_{i=1}^{n+1} x_i t_i$, $f_{-} \in \ca{O}_{X_{-}}$, where the $x$'s are the base coordinates, the $t$'s are fibre coordinates, and we use the shorthand notation ${\bf x} = (x_1, \dots, x_{n+1})$, ${\bf t} = (t_1, \dots, t_{n+1})$.
Then, we define $Z_{-} = \{ f_{-} = 0 \}$.
Notice that we have $Z_{-} \simeq \Tot( \Omega^1_{\mb{P}^n})$.
The procedure of blowing-up $\mb{P}^n \subset X_{-}$ and then contracting the other $\mb{P}^n$ can be carried out on $Z_{-}$ as well, \cite{Kawamata-K-D-equivalence}, \cite{Namikawa-mukai-flops}.
We get the following diagram whose top row takes the name of \emph{Mukai flop}
\begin{equation}
	\label{eqn:diagram-mukai}
	\begin{tikzcd}[row sep = 3 em, column sep = 4 em]
		Z_{-} \ar[d, hook,  "f_{-} = 0"'] & \ar[l, "p"']  \widehat{Z} \ar[d, hook, "\hat{f} = 0"] \ar[r, "q"] & Z_{+} \ar[d, hook,  "f_{+} = 0"]\\
		X_{-}  & \ar[l, "p"] \widehat{X} \ar[r, "q"'] & X_{+}
	\end{tikzcd}.
\end{equation}
Here\footnote{Here $s$'s are fibre coordinates for $X_{+}$ and $u$ is the fibre coordinate of $\widehat{X}$.} $f_{+}( {\bf y}, {\bf s} ) = \sum_{i=1}^{n+1} y_i s_i$, $f_{+} \in \ca{O}_{X_{+}}$, and $\hat{f}({\bf x}, {\bf y }, u) = \sum_{i = 1}^{n+1} x_i y_i u$, $\hat{f} \in \ca{O}_{\widehat{X}}$.

Notice that the equation $\hat{f} = 0$ describes $\widehat{Z}$ as a normal crossing divisor in $\widehat{X}$.
Moreover, $\widehat{Z}$ has two irreducible components: the blow-up of $Z_{-}$ (or $Z_{+}$) along $\mb{P}^n$, that we denote $\widetilde{Z}$, and $\mb{P}^n \times \mb{P}^n$.
These two irreducible components are glued along $\mb{P} (\Omega^1_{\mb{P}^n})$ that is the exceptional locus of the blow-up and that sits inside $\mb{P}^n \times \mb{P}^n$ via the inclusion $\Omega^1_{\mb{P}^n} \hookrightarrow \ca{O}_{\mb{P}^n}^{\oplus n+1}(-1)$.
As we know that $\mb{P}^n \times \mb{P}^n \subset \widehat{X}$ is cut out by $\{u = 0\}$ in $\widehat{X}$, we get that $\widetilde{Z}$ is cut out by $\left\{ w := \sum_{i=1}^{n+1} x_i y_i = 0 \right\}$, $w \in \ca{O}_{\widehat{X}}(1,1)$.
Therefore, we have exact sequences
\begin{equation}
	\label{resol-P}
	\ca{O}_{\widetilde{Z}}(1,1) \xrightarrow{u} \ca{O}_{\widehat{Z}} \rightarrow \ca{O}_{\mb{P}^n \times \mb{P}^n}
\end{equation}
\begin{equation}
	\label{resol-Z}
		\ca{O}_{\mb{P}^n \times \mb{P}^n}(-1,-1) \xrightarrow{w} \ca{O}_{\widehat{Z}} \rightarrow  \ca{O}_{\widetilde{Z}}.
\end{equation}
The smooth varieties $Z_{-}$ and $Z_{+}$ are birational and derived equivalent via the functors $p_{\ast}q^{\ast}$ and $q_{\ast}p^{\ast}$ \cite{Kawamata-K-D-equivalence}.
Notice that the equivalence $\derived^{\bounded}(Z_{+}) \simeq \derived^{\bounded}(Z_{-})$ induces an equivalence $\derived_{\qc}(Z_{+}) \simeq \derived_{\qc}(Z_{-})$ by \cite[Lemma 2.12]{KL15}.
Therefore, \autoref{thm:spherical-functor-from-sods} applies and the flop-flop functor $q_{\ast}p^{\ast}p_{\ast}q^{\ast}$ is the inverse of the spherical twist around $\overline{\Psi}_{+} \colon \ker \overline{p}_{\ast} \rightarrow \derived_{\qc}(Z_+)$.

\begin{ex}[Where boundedness fails]
	\label{ex:where-bound-fails}
	We wish to show $\mcr{C} := \tgen{\im(Q  \pi_{-}  q^{\ast} \vert_{\derived^{\bounded}(Z_{+})})} \subsetneq \ker \overline{p}_{\ast} \subset \derived^{\bounded}(\widehat{Z}) / \mcr{K}^{\bounded}$ thus proving that passing to $\derived_{\qc}(-)$ was indeed a necessary step.

	Let us consider the Mukai flop for $n=1$, we will show $Q(\ca{O}_{\mb{P} \times \mb{P}}(0,-1)) \notin \mcr{C}$.
	Notice that as the relative dimension of the fibres is at most one, by \autoref{subsect:bounded-derived-categories} we have a fully faithful embedding $\derived^{\bounded}(\widehat{Z}) / \mcr{K}^{\bounded} \hookrightarrow \derived_{\qc}(\widehat{Z})/\mcr{K}$ and therefore $\im(Q  \pi  q^{\ast} \vert_{\derived^{\bounded}(Z_{+})})$ generates the same subcategory when considered inside either categories.
	$\derived^{\bounded}(Z_{+})$ is generated by $\ca{O}_{Z_{+}} \oplus \ca{O}_{Z_{+}}(-1)$, thus by \autoref{thm:4-sods-quotient} $\ker \overline{p}_{\ast} $ is generated by $\overline{F} = Q \left( F \right)$, $F = \text{cone} \left(	\ca{O}_{\widehat{Z}} (1,0) \overset{u}{\rightarrow} \ca{O}_{\widehat{Z}} (0,-1) \right)$.
	Using \eqref{resol-P} and \eqref{resol-Z} we see that $\ca{H}^i (F) \simeq \ca{O}_{\mb{P} \times \mb{P}}(0,-1)$ for $i= -1, -2$ and zero otherwise.
	Hence, we get
	\[
		\text{End}^{\bullet}_{\derived_{\qc}(\widehat{Z})/\mcr{K}}(\overline{F}) \simeq \text{End}^{\bullet}_{\derived_{\qc}(\widehat{Z})}(F) \simeq k[\varepsilon]/\varepsilon^2,
	\]
	where $\varepsilon$ has degree $-1$ and the first isomorphism follows from \cite[Lemma 5.7]{BodBon15}.
	As $k[\varepsilon]/\varepsilon^2$ is \emph{intrinsically formal}, \emph{i.e.}, every dga with the same cohomology is quasi-isomorphic to it, by \cite[Prop. 1.17]{Lunts-Orlov-Uniqueness-enhancement} we get $\ker \overline{p}_{\ast}  \simeq \derived(k[\varepsilon]\left/ (\varepsilon^2)\right.)$.
	If $Q(\ca{O}_{\mb{P} \times \mb{P}}(0,-1)) \in \mcr{C} = \tgen{\overline{F}}$, then via this equivalence it should correspond to a compact object.
	However, $Q(\ca{O}_{\mb{P} \times \mb{P}}(0,-1))$ maps to the module $k$ concentrated in degree $0$, and this module is not compact.
	Hence, $Q(\ca{O}_{\mb{P} \times \mb{P}}(0,-1)) \notin \mcr{C}$.
	Notice however that
	\begin{equation}
		\label{eqn:hocolim-description}
		\mathrm{hocolim} \left\{ \right. \dots \rightarrow \overline{F}[2] \rightarrow \overline{F}[1] \rightarrow \overline{F} \left. \right\} \simeq Q(\ca{O}_{\mb{P} \times \mb{P}}(0,-1))
	\end{equation}
	and therefore $Q(\ca{O}_{\mb{P} \times \mb{P}}(0,-1)) \in \cgen{\overline{F}}  = \ker \overline{p}_{\ast}$, as it ought to be.
\end{ex}

Given a $\mb{P}^n$-object $E$ \cite[Def. 1.1]{Huyb-Thomas06} (or, more generally, a split $\mb{P}$-functor \cite{Add-symmetries.hyperkahler}, \cite{Cautis-Flops-and-about}, or a $\mb{P}$-functor \cite{AL-P-n-func}), one can construct an autoequivalence $P_E$ of $\derived^{\bounded}(X)$ called the $\mb{P}$-twist around the $\mb{P}^n$-object $E$ \cite{Huyb-Thomas06}.
By \cite[Lemma 2.12]{KL15} we have $P_E \in \Aut(\derived_{\qc}(X))$.

In \cite{Addington-Donovan-Meachan} the authors prove that $\ca{O}_{\mb{P}} \in \derived_{\qc}(Z_{+})$, $\mb{P} = \mb{P}^n \subset Z_{+}$, is a $\mb{P}^{n}$-object, and that $q_{\ast}p^{\ast}p_{\ast}q^{\ast} \simeq P_{\ca{O}_{\mb{P}}(-1)}^{-1}  \dots  P_{\ca{O}_{\mb{P}}(-n)}^{-1}$.

By \cite[Proposition 4.2]{Seg-autoeq-spherical-twist} $P_E$ is the spherical twist around the spherical functor
\[
	\derived(k[q]) \xrightarrow{- \stackrel{L}{\otimes}_{k[q]} E} \derived_{\qc}(X)
\]
where we assumed we replaced $E$ with an h-injective resolution and we make $q$ act via a representative $\widetilde{q}$ of the cohomology class of the extension $E \rightarrow E[2]$.
Thus, in analogy with \autoref{section:std-flops}, we would expect $\ker \overline{p}_{\ast} $ to have an SOD in terms of $n$ copies of $\derived(k[q])$.
However, this guess is wrong because $\widehat{Z}$ is singular.
More precisely, $\oplus_{j=1}^n Q(\ca{O}_{\mb{P} \times \mb{P}}(0,-j))$ generates $\ker \overline{p}_{\ast} $ but it is not compact.
Indeed, using \eqref{eqn:hocolim-description} and \cite[Lemma 5.7]{BodBon15} we get a quasi-isomorphism of complexes of vector spaces
\[
	\End^{\bullet}_{\dk}(Q(\ca{O}_{\mb{P} \times \mb{P}}(0,-1))) \simeq \bigoplus_{n \geq 0} k[-2n],
\]
and thus $Q(\ca{O}_{\mb{P} \times \mb{P}}(0,-1))$ cannot be compact.

To describe $\ker \overline{p}_{\ast}$ we use \cite[Remark 4.4]{Seg-autoeq-spherical-twist}: the functor
\[
	\derived(k[\varepsilon]/\varepsilon^2) \xrightarrow{-\stackrel{L}{\otimes}_{k[\varepsilon]/\varepsilon^2} I_E} \derived_{\qc}(X),
\]
where $I_E := \{ E[-1] \rightarrow E[1]\}$, is spherical and its spherical twist is given by $P_E$.

\begin{thm}
	\label{thm:C=D(R)-Mukai-complete}
	There exist a choice of h-injective resolutions $I_j$ of $\ca{O}_{\mb{P}}(-j)$ and of representatives $\tilde{q}_j$ of the degree two morphisms $q_j \in \End^{\bullet}_{\derived_{\qc}(Z_{+})}(\ca{O}_{\mb{P}}(-j))$ such that we have an SOD
	\[
		\ker \overline{p}_{\ast} = \gen{\derived(k[\varepsilon_n]/\varepsilon_n^2), \dots, \derived(k[\varepsilon_1]/\varepsilon_1^2)}
	\]
	with $\deg(\varepsilon_j) = -1$ and the right gluing bimodules for this SOD are given by $\Hom_{\inj_{Z_+}}(I_j', I_i')$ for $j \geq i$ where $I_j' = \left\{ I_j[-1] \rightarrow I_j[1] \right\}$ is the h-injective complex given by the extension $\tilde{q}_j$.
	Furthermore, we have $\overline{\Psi}_{+}(k[\varepsilon_j]/\varepsilon_j^2) \simeq I_j'$.
\end{thm}

\begin{proof}
	We prove the statement for $n=2$, the general case being analogous.
	Consider the generators $E_0 = \ca{O}_{Z_{+}}$, $E_1 = \ca{O}_{Z_{+}}(-1)$, $E_2 =\{ \ca{O}_{Z_{+}}(-1)^{\oplus 3} \xrightarrow{\mathbf{s}} \ca{O}_{Z_{+}}(-2)\}$ of $\derived_{\qc}(Z_{+})$ and take $\overline{E} = \oplus_{i=0}^2 \overline{\Psi}^L_{+}(E_i)$ the compact generator of $\ker \overline{p}_{\ast}$ (notice that $\overline{\Psi}^L_{+}(E_0) = 0$).

	Notice that, as $Z_{+} \subset X_{+}$ is an hypersurface cut out by a section $f_{+} \in \ca{O}_{X_{+}}$, we get the distinguished triangle
	\[
		\ca{O}_{\mb{P}}[1] \rightarrow \{ \ca{O}_{Z_{+}}(3) \rightarrow \ca{O}_{Z_{+}}(2)^{\oplus 3} \rightarrow \ca{O}_{Z_{+}}(1)^{\oplus 3} \rightarrow \ca{O}_{Z_{+}} \} \rightarrow \ca{O}_{\mb{P}}
	\]
	where the complex in the middle is the Koszul complex associated to $\mathbf{s}$, see e.g. \cite[Corollary 11.4]{Huy06}.
	Moreover, notice that we have long exact sequences
	\begin{equation}
		\label{eqn:psi-1}
		\ca{O}_{\mb{P}\times\mb{P}}(0,-1) \rightarrow \ca{O}_{\widehat{Z}}(1,0) \rightarrow \ca{O}_{\widehat{Z}}(0,-1) \rightarrow \ca{O}_{\mb{P}\times\mb{P}}(0,-1)
	\end{equation}
	\begin{equation}
		\label{eqn:psi-2}
		\ca{O}_{\widehat{Z}}(-2,-1) \rightarrow \ca{O}_{\widehat{Z}}(-1,-1)^{\oplus 3} \rightarrow \ca{O}_{\widehat{Z}}(0,-1)^{\oplus 3} \rightarrow \ca{O}_{\widehat{Z}}(1,-1)
	\end{equation}
	and the distinguished triangle
	\begin{equation}
		\label{eqn:dt-psi-2-first}
		\begin{aligned}
			\ca{O}_{\mb{P}\times\mb{P}}(0,-2)[1] & \, \rightarrow \{\ca{O}_{\widehat{Z}}(-2,-1) \rightarrow \ca{O}_{\widehat{Z}}(-1,-1)^{\oplus 3} \rightarrow \ca{O}_{\widehat{Z}}(0,-1)^{\oplus 3} \rightarrow \ca{O}_{\widehat{Z}}(0,-2) \} \\
			& \, \rightarrow \ca{O}_{\mb{P}\times\mb{P}}(0,-2)
		\end{aligned}
	\end{equation}
	The sequence \eqref{eqn:psi-1} is obtained joining \eqref{resol-P} and \eqref{resol-Z}, \eqref{eqn:psi-2} is the Koszul complex associated to $\mathbf{x}$, and \eqref{eqn:dt-psi-2-first} is obtained by tensoring \eqref{eqn:psi-1} with $\ca{O}_{\widehat{Z}}(0,-1)$ and replacing $\ca{O}_{\widehat{Z}}(1,-1)$ with the complex given by \eqref{eqn:psi-2}.

	Sequence \eqref{eqn:psi-1} can be rewritten as the distinguished triangle
	\begin{equation}
		\label{eqn:dt-psi-1}
		Q(\ca{O}_{\mb{P}\times\mb{P}}(0,-1))[1] \rightarrow \overline{\Psi}^{L}_{+}(\ca{O}_{Z_{+}}(-1)) \rightarrow Q(\ca{O}_{\mb{P}\times\mb{P}}(0,-1)),
	\end{equation}
	and thus we see $\End^{\bullet}_{\derived_{\qc}(Z_{+}) \left/ \mcr{K} \right.}(\overline{\Psi}^{L}_{+}(\ca{O}_{Z_{+}}(-1))) \simeq k[\varepsilon_1] / \varepsilon_1^2$, $\deg(\varepsilon_1) = -1$.

	As in the proof of \autoref{lem:resolution} we have $Q(\pi_{-}(\ca{O}_{\widehat{Z}}(-2,-1))) \simeq 0 \simeq Q(\pi_{-}(\ca{O}_{\widehat{Z}}(-1,-1)))$.
	Let us show the second statement for completeness;
	by \eqref{resol-P} and \eqref{resol-Z} we have $\ker(\ca{O}_{\widehat{Z}} \xrightarrow{u} \ca{O}_{\widehat{Z}}(-1,-1) ) = \ca{O}_{\mb{P}\times\mb{P}}(-1,-1) = \mathrm{coker}(\ca{O}_{\widehat{Z}} \xrightarrow{u} \ca{O}_{\widehat{Z}}(-1,-1) )$ and thus the statement follows.
	Hence, we get the distinguished triangle
	\begin{equation}
		\label{eqn:dt-psi-2-second}
		Q(\ca{O}_{\mb{P}\times\mb{P}}(0,-2))[1] \rightarrow \overline{\Psi}^{L}_{+}(\{ \ca{O}_{Z_{+}}(-1)^{\oplus 3} \xrightarrow{\mathbf{s}} \ca{O}_{Z_{+}}(-2)\}) \rightarrow Q(\ca{O}_{\mb{P}\times\mb{P}}(0,-2))
	\end{equation}
	An explicit computation using the adjunction $\overline{\Psi}^L_{+} \dashv \overline{\Psi}_{+}$ shows $\End^{\bullet}_{\derived_{\qc}(Z_{+}) \left/ \mcr{K} \right.}(\overline{\Psi}^{L}_{+}(E_2)) \simeq k[\varepsilon_2] / \varepsilon_2^2$, $\deg(\varepsilon_2) = -1$, $\Hom^{\bullet}_{\derived_{\qc}(Z_{+}) \left/ \mcr{K} \right.}(\overline{\Psi}^{L}_{+}(E_1), \overline{\Psi}^{L}_{+}(E_2)) = 0$, and
	\[
		\Hom^{\bullet}_{\derived_{\qc}(Z_{+}) \left/ \mcr{K} \right.}(\overline{\Psi}^{L}_{+}(E_2), \overline{\Psi}^{L}_{+}(E_1))
		\simeq
		\Hom^{\bullet}_{\derived_{\qc}(Z_{+})}(I_2', I_1').
	\]
	This proves the SOD because the dg-algebras $k[\varepsilon_i]/\varepsilon_i^2$ are intrinsically formal, and the statement for the gluing bimodules at the level of underlying graded modules.
	To get the statement at the level of dg-modules one can proceed analogously to the proof of \autoref{thm:C=D(R)}.

	The statement about the images of $k[\varepsilon_j]/\varepsilon_j^2$ follows by mapping via $\overline{\Psi}_{+}$ the distinguished triangles \eqref{eqn:dt-psi-1} and \eqref{eqn:dt-psi-2-second}.
\end{proof}

\begin{rmk}[Where boundedness fails, II]
	\label{rmk:where-boundedness-fails-II}
	We now present a second way in which boundedness fails: we show that the SOD $\derived_{\qc}(\widehat{Z}) = \gen{\mcr{K}, \mcr{S}_{+}, p^{\ast}\derived_{\qc}(Z_{-})}$ does not induce and SOD of $\derived^{\bounded}(\widehat{Z})$.
	Consider the case $n=2$ for simplicity.
	Then, joining \eqref{resol-P} and \eqref{resol-Z} we get the long exact sequence
	\[
		\dots \xrightarrow{w} \ca{O}_{\widehat{Z}}(1,-1) \xrightarrow{u} \ca{O}_{\widehat{Z}}(0,-2) \xrightarrow{w} \ca{O}_{\widehat{Z}}(1,-1) \xrightarrow{u} \ca{O}_{\widehat{Z}}(0,-2) \rightarrow \ca{O}_{\mb{P}\times\mb{P}}(0,-2).
	\]
	Using \eqref{eqn:psi-2} we see that we have a distinguished triangle
	\[
		\{ \dots \xrightarrow{\mathbf{y}} \ca{O}_{\widehat{Z}}(0,-1)^{\oplus 3} \xrightarrow{u\mathbf{x}} \ca{O}_{\widehat{Z}}(0,-2)\} \rightarrow \ca{O}_{\mb{P}\times\mb{P}}(0,-2) \rightarrow \bigoplus_{n \geq 1} \{ \ca{O}_{\widehat{Z}}(-2,-1) \xrightarrow{\mathbf{x}} \ca{O}_{\widehat{Z}}(-1,-1)^{\oplus 3}\}[2n]
	\]
	and the image of the above triangle via $\pi_{-}$ gives the decomposition of $\ca{O}_{\mb{P}\times\mb{P}}(0,-2)$ with respect to $\derived_{\qc}(\widehat{Z}) = \gen{\mcr{K}, \mcr{S}_{+}, p^{\ast}\derived_{\qc}(Z_{-})}$.
	Hence, we see that the projection of $\ca{O}_{\mb{P}\times\mb{P}}(0,-2)$ to $\mcr{K}$ is unbounded.
	Notice that this does not rule out the possibility that $\mcr{K}^b$ is left admissible in $\derived^{\bounded}(\widehat{Z})$.
\end{rmk}

\begin{thm}
	\label{thm:end-algebra-structure-sheaves-mukai}
	We have
	\[
		\Hom^{\bullet}_{\derived_{\qc}(\widehat{Z})\left/\mcr{K}\right.}(Q(\ca{O}_{\mb{P}\times\mb{P}}(0,-j)), Q(\ca{O}_{\mb{P}\times\mb{P}}(0,-i))) \simeq
		\left\{
		\begin{array}{lr}
			k[q_i] & i = j\\
			\Hom^{\bullet}_{\derived_{\qc}(Z_{+})}(\ca{O}_{\mb{P}}(-j),\ca{O}_{\mb{P}}(-i)) & j > i\\
			0 & i < j
		\end{array}
		\right.
	\]
	where $\deg(q_i) = 2$.
	Hence,
	\[
		\tgen{\oplus_{j=1}^n Q(\ca{O}_{\mb{P}\times\mb{P}}(0,-j))} \simeq \gen{\derived(k[q_n])^c, \dots, \derived(k[q_1])^c}
	\]
	where the right gluing bimodule between the $j$-th and the $i$-th component (counting right to left) is given by $\mathrm{RHom}_{X_{+}}(\ca{O}_{\mb{P}}(-j), \ca{O}_{\mb{P}}(-i))$.
	Furthermore, the functor $\overline{\Psi}_{+}$ restricts to a spherical functor
	\begin{equation}
		\label{eqn:psi-restricted-to-thick}
		\overline{\Psi}_{+} \vert_{\tgen{\oplus_{j=1}^n Q(\ca{O}_{\mb{P}\times\mb{P}}(0,-j))}} \colon \tgen{\oplus_{j=1}^n Q(\ca{O}_{\mb{P}\times\mb{P}}(0,-j))} \rightarrow \derived^{\bounded}(Z_{+}).
	\end{equation}

\end{thm}

\begin{proof}
	We prove that stament for $n=2$, the general case being analogous, and we use the same notation as in the proof of \autoref{thm:C=D(R)-Mukai-complete}.

	Notice that $\overline{\Psi}^L_{+}$ and $\overline{\Psi}^R_{+}$ preserve compactness because $\overline{\Psi}_{+}$ is spherical.
	By \eqref{eqn:dt-psi-1} and \eqref{eqn:dt-psi-2-second} we see that $\overline{E} \in \tgen{\oplus_{j=1}^2 Q(\ca{O}_{\mb{P}\times\mb{P}}(0,-j))}$.
	Therefore, as $\overline{E}$ is a compact generator of $\ker \overline{p}_{\ast}$, we get $(\ker \overline{p}_{\ast})^c \subset \tgen{\oplus_{j=1}^2 Q(\ca{O}_{\mb{P}\times\mb{P}}(0,-j))}$.
	Thus, the adjunctions $\overline{\Psi}^L_{+} \dashv \overline{\Psi}_{+} \dashv \overline{\Psi}^R_{+}$ restrict to adjunctions between $\tgen{\oplus_{j=1}^2 Q(\ca{O}_{\mb{P}\times\mb{P}}(0,-j))}$ and $\derived^{\bounded}(Z_{+})$.
	It follows that \eqref{eqn:psi-restricted-to-thick} is spherical because $\overline{\Psi}_{+}$ was.

	Notice that if we prove the statement about the graded homs, the claim about the SOD follows and the one about the gluing bimodules can be proved with the same arguments as in the proof of \autoref{thm:C=D(R)}.
	By \eqref{eqn:dt-psi-1} and \eqref{eqn:dt-psi-2-second} we get $\mathrm{hocolim} \{ X_n^j\} \simeq Q(\ca{O}_{\mb{P}\times\mb{P}}(0,-j))$ where
	\[
		X_{n}^j = \{ \overline{\Psi}^{L}_{+} (E_j)[n] \rightarrow \overline{\Psi}^{L}_{+} (E_j)[n-1] \rightarrow \dots \rightarrow \overline{\Psi}^{L}_{+} (E_j)[1] \rightarrow \overline{\Psi}^{L}_{+} (E_j)\}
	\]
	(see e.g. \cite[\href{https://stacks.math.columbia.edu/tag/0A5K}{Tag 0A5K}]{stacks-project} for the definition of homotopy colimit).
	The statement follows using the adjunction $\overline{\Psi}^{L}_{+} \dashv \overline{\Psi}_{+}$ because
	\[
		\Hom^{\bullet}_{\derived_{\qc}(Z_{+})}(E_j, \ca{O}_{\mb{P}}(-i)) \simeq
		\left\{
		\begin{array}{lr}
			k & i = j\\
			\Hom^{\bullet}_{\derived_{\qc}(Z_{+})}(I_j', \ca{O}_{\mb{P}}(-i)) & j > i\\
			0 & i < j
		\end{array}
		\right.
	\]
	and we have $\mathrm{hocolim} \{ I_j'[n] \rightarrow \dots \rightarrow I_j\} \simeq \ca{O}_{\mb{P}}(-j)$.
\end{proof}

\subsection{Other examples}
Let us walk through a few more examples where we can apply the theory of \autoref{sect:flop-flop-inverse} but where an explicit description of the category $\ker \overline{p}_{\ast}$ eludes our understanding.

\subsubsection{Grassmannian flops}

Let $V$ and $S$ be two vector spaces of dimension $n$ and $r$, respectively, with $r < n$ and consider the stack quotient
\[
	\mathfrak{X}^{r,n} = \left[ \Hom(S,V) \oplus \Hom(V,S) \left/ \mathrm{GL}(S) \right. \right]
\]
where the action is given by $M \cdot (a,b) = (a M^{-1}, Mb)$.
The GIT quotient associated to the linearisations $\ca{O}(1) := \det \, S^{\vee}$ and $\ca{O}(-1)$ are, respectively,
\[
	\mathfrak{X}^{ss}_{+} \left/ \mathrm{GL}(S) \right. = \{ (a,b) : b \text{ is surjective} \} \left/ \mathrm{GL}(S) \right. = \Tot \left( \Hom(S,V) \rightarrow \mathrm{Gr}(V,r) \right) =: X_{+},
\]
which is the total space of a vector bundle over the grassmannian of $r$ dimensional quotients in $V$, and
\[
	\mathfrak{X}^{ss}_{-} \left/ \mathrm{GL}(S) \right. = \{ (a,b) : a \text{ is surjective} \} \left/ \mathrm{GL}(S) \right. = \Tot \left( \Hom(V,S) \rightarrow \mathrm{Gr}(r,V) \right) =: X_{-},
\]
where $\mathrm{Gr}(r,V)$ is the grassmannian of $r$ dimensional subspaces of $V$.

The two varieties $X_{-}$ and $X_{+}$ are birational and derived equivalent, see \cite{Donovan-Segal-Grassmannian-twists}, \cite{Halpern-GIT-15}.
The fibre product over the common singularity is given by
\[
	\widehat{X} = \Tot \left( \Hom(Q,S) \rightarrow \mathrm{Gr}(r,V) \times \mathrm{Gr}(V,r) \right)
\]
where $Q$ is the tautological quotient bundle and $S$ is the tautological subbundle.

In \cite{Ballard-Chidambaram-Nitin-Favero-McFaddin-Vandermolen-Kernel-Gr-flop} it was proved that $\widehat{X}$ gives a derived equivalence between $\derived^{\bounded}(X_{-})$ and $\derived^{\bounded}(X_{+})$.
As everything is smooth, by \cite[Lemma 2.12]{KL15} we obtain an equivalence $\derived_{\qc}(X_{-}) \simeq \derived_{\qc}(X_{+})$, and we can apply \autoref{thm:spherical-functor-from-sods}.
In \cite{Ballard-Chidambaram-Nitin-Favero-McFaddin-Vandermolen-Kernel-Gr-flop} it is also (implicitly) proved that the flop-flop functor is given by a composition of $n-r$ window shifts, see [{\it ibidem}, Corollary 5.2.9, Corollary 5.2.10].
Moreover, by \cite{Donovan-Segal-Grassmannian-twists} we know that every window shift is realised as the spherical twist around a spherical functor whose source category is given by $\derived_{\qc}(\mathfrak{X}^{r-1,n})$.
Therefore, one would expect $\ker \overline{p}_{\ast}$ to have a SOD reflecting this splitting.
However, the picture is more complicated.

\subsubsection{Abuaf flop}

Consider $V$ a symplectic vector space of dimension $4$ and let $\mb{P} = \mb{P}V$, $\mathrm{LGr} = \mathrm{LGr}(2,V)$.
Then, consider
\[
	\begin{array}{lcr}
		X_{-} = \Tot (L^{\perp} \left/ L \right. \otimes L^2 \rightarrow \mb{P}) & \mathrm{and} & \quad X_{+} = \Tot ( S(-1) \rightarrow \mathrm{LGr})
	\end{array}
\]
where $L$ is the tautological subbundle on $\mb{P}$, $S$ is the tautological subbundle on $\mathrm{LGr}$, and $\ca{O}_{\mathrm{LGr}}(-1) = \BigWedge^2 S$.
The varieties $X_{-}$ and $X_{+}$ are birational and derived equivalent \cite{Seg-Abuaf-Flop}; furthermore, they resolve the same singularity $Y = \mathrm{Spec} \, H^{0}(X_{\pm}, \ca{O}_{X_{\pm}})$.
In \cite{Hara17} the author proves that the structure sheaf of the fibre product $X_{-} \times_Y X_{+}$ gives an equivalence from $\derived^{\bounded}(X_{-})$ to $\derived^{\bounded}(X_{+})$, and thus it gives an equivalence in both directions \cite[Theorem 1.1]{Bridgeland-equivalences-triangulated-categories}.

It turns out that there are two families of tilting bundles on both sides of the flop and that one of them gives the flop functor in one direction, \cite[Thereom 4.5]{Hara17}.
It can be proved that the other family gives the flop functor in the other direction.
Working out the exact numbers, we get an isomorphism of functors
\begin{equation}
	\label{eqn:flop-flop-abuaf}
	q_{\ast} p^{\ast} p_{\ast} q^{\ast} \simeq T^{-1}_{j_{\ast}S}  T^{-1}_{j_{\ast}\ca{O}_{\textup{LGr}}(-1)}  T^{-1}_{j_{\ast}S(-1)}  T^{-1}_{j_{\ast}\ca{O}_{\textup{LGr}}(-2)}  T^{-1}_{j_{\ast}S(-2)}
\end{equation}
where $j : \mathrm{LGr} \hookrightarrow X_{+}$ is the inclusion of the zero section.
Given this decomposition of the flop-flop functor, one might expect the category $\ker \overline{p}_{\ast}$ to have a full, exceptional collection of length five.
However, there is a fundamental difference between this example and standard flops: the objects $j_{\ast}S$, $i_{\ast} \ca{O}_{\mathrm{LGr}}(-1)$, $j_{\ast}S(-1)$, $i_{\ast}\ca{O}_{\mathrm{LGr}}(-2)$, and $j_{\ast}S(-2)$ are not \say{independent} in the derived category: we have the following short exact sequence on $\mathrm{LGr}$
\begin{equation}
	\label{eqn:ex-seq-LGr}
	S(-1) \rightarrow V^{\ast} \otimes \ca{O}_{\mathrm{LGr}}(-1) \rightarrow S.
\end{equation}
Hence, the functor obtained by gluing the twists in \eqref{eqn:flop-flop-abuaf} using the construction of \cite{Barb-Spherical-twists} is not conservative, while the spherical functor $\overline{\Psi}_{+}$ is.
We can guess what $\ker \overline{p}_{\ast}$ should look like as follows;
the fibre product $\widehat{X}$ is the gluing of $\Bl_{\mathrm{LGr}}X_{+}$ and $\mb{P} \times \mathrm{LGr}$ along $\mb{P}(S(-1))$, which is embedded in the latter via the exact sequence \eqref{eqn:ex-seq-LGr}.
We can prove that $\ker \overline{p}_{\ast}$ is generated by the objects $i_{\ast} \ca{O}_{\mathrm{LGr}}(-1)$, $i_{\ast}\ca{O}_{\mathrm{LGr}}(-2)$, $i_{\ast} S(-1)$ where $i : \mb{P} \times \mathrm{LGr} \hookrightarrow \widehat{X}$ is the closed embedding, and therefore one might conjecture that $\ker \overline{p}_{\ast}$ is the quotient of the source category obtained by gluing the spherical twists in \eqref{eqn:flop-flop-abuaf} by the kernel of the associated spherical functor.
However, at the moment we do not know how to prove whether this is right or wrong.

\appendix
\section{Computations}

\subsection{Standard flops (local model)}
\label{sect:appendix-std-flops}

We use the notation of \autoref{sect:flop-flop-inverse} and \autoref{section:std-flops};
in particular, $\overline{\Psi}_{+} \colon \ker \overline{p}_{\ast} \rightarrow \dqc$ and its left adjoint is $\overline{\Psi}_{+}^L = Q  \pi_{-}  p^{\ast}$, $\pi_{-} = \cone(p^{\ast}p_* \rightarrow \id)$.

First of all, notice that on $X_{+}$ we have the long exact sequence
\begin{equation}
	\label{eqn:Kosz-X-+}
	\ca{O}_{X_{+}}(-n-1) \rightarrow \ca{O}_{X_{+}}(-n)^{\oplus n+1} \rightarrow \dots \rightarrow \ca{O}_{X_{+}}(-1)^{\oplus n+1} \rightarrow \ca{O}_{X_{+}}
\end{equation}
given by pulling up the Koszul complex that exists on $\mb{P}$, and that on $\widehat{X}$ we have the following short exact sequence
\begin{equation}
	\label{eqn:reg-section-std-flops}
	\ca{O}_{\widehat{X}}(1,1) \xrightarrow{u} \ca{O}_{\widehat{X}} \rightarrow \ca{O}_{\mb{P}\times\mb{P}}.
\end{equation}
Moreover, notice that we have
\begin{equation}
	\label{eqn:image-j-std-flops}
		\overline{\Psi}_{+}^L(\ca{O}_{X_{+}}(-j)) = Q \left( \left\{\ca{O}_{\widehat{X}}(j,0) \xrightarrow{u^j} \ca{O}_{\widehat{X}}(0,-j) \right\} \right) \simeq Q(\ca{O}_{\widehat{X}}(0,-j) \otimes \ca{O}_{\widehat{X}} \left/ \ca{I}_{\mb{P} \times \mb{P}}^{\, j} \right.)
\end{equation}

\begin{lem}
	\label{lem:resolution}
	For $j = 1, \dots, n$ we have an isomorphism
	\[
		\left\{ \overline{\Psi}_{+}^{L} (\ca{O}_{X_{+}}(-1))^{\oplus \binom{n+1}{j-1}} \rightarrow \overline{\Psi}_{+}^{L} (\ca{O}_{X_{+}}(-2))^{\oplus \binom{n+1}{j-2}} \rightarrow \dots \rightarrow \overline{\Psi}_{+}^{L} (\ca{O}_{X_{+}}(-j)) \right\} \simeq Q(\ca{O}_{\mb{P} \times \mb{P}}(0,-j))
	\]
	induced by the map $\overline{\Psi}_{+}^L \ca{O}_{X_{+}}(-j) \rightarrow Q(\ca{O}_{\mb{P} \times \mb{P}}(0,-j))$ that comes from adjunction.
\end{lem}

\begin{proof}
	We prove the statement for $n = 2$, the general case being analogous.
	Equation \eqref{eqn:image-j-std-flops} is the statement for $j=1$.
	For $j=2$ notice that we have the long exact sequence
	\begin{equation}
		\label{eqn:les-std-flop-proof}
		\ca{O}_{\widehat{X}}(-2, -1) \rightarrow \ca{O}_{\widehat{X}}(-1,-1)^{\oplus 3} \rightarrow \ca{O}_{\widehat{X}}(0, -1)^{\oplus 3} \rightarrow \ca{O}_{\widehat{X}}(0,-2) \rightarrow \ca{O}_{\mb{P}\times\mb{P}}(0,-2)
	\end{equation}
	obtained by pulling up \eqref{eqn:Kosz-X-+} to $\widehat{X}$, tensoring it with $\ca{O}_{\widehat{X}}(1,-1)$, and joining it with \eqref{eqn:reg-section-std-flops} tensored with $\ca{O}_{\widehat{X}}(0,-2)$.
	Tensoring the short exact sequence \eqref{eqn:reg-section-std-flops} with $\ca{O}_{\widehat{X}}(-1,-1)$ we obtain
	\[
		\ca{O}_{\widehat{X}} \xrightarrow{u} \ca{O}_{\widehat{X}}(-1,-1) \rightarrow \ca{O}_{\mb{P}\times\mb{P}}(-1,-1),
	\]
	and as $\ca{O}_{\mb{P}\times\mb{P}}(-1,-1) \in \mcr{K}$ we have $Q(\pi_{-}(\ca{O}_{\widehat{X}}(-1,-1))) \simeq Q(\pi_{-}(\ca{O}_{\widehat{X}})) \simeq 0$.
	Similarly, one can prove $Q(\pi_{-}(\ca{O}_{\widehat{X}}(-2,-1))) \simeq 0$.
	Therefore, applying $Q  \pi_{-}$ to \eqref{eqn:les-std-flop-proof} and using \eqref{eqn:image-j-std-flops}, we get
	\[
		Q(\ca{O}_{\mb{P}\times\mb{P}}(0,-2)) \simeq \left\{ \overline{\Psi}_{+}^L (\ca{O}_{X_{+}}(-1))^{\oplus 3} \rightarrow \overline{\Psi}_{+}^L(\ca{O}_{X_{+}}(-2))  \right\},
	\]
	which is the statement.
\end{proof}

\begin{lem}
	\label{lem:cpt-gen-std-flops}
	The object $\oplus_{j=1}^n Q(\ca{O}_{\mb{P}\times\mb{P}}(0,-j))$ is a compact generator of $\ker \overline{p}_{\ast}$.
\end{lem}

\begin{proof}
	Notice that $\oplus_{j=0}^n \ca{O}_{X_{+}}(-j)$ is a compact generator for $\dqc$, and thus, by \autoref{thm:4-sods-quotient}, we get that $\oplus_{j=0}^n \overline{\Psi}_{+}^L(\ca{O}_{X_{+}}(-j))$ is a compact generator for $\ker \overline{p}_{\ast}$.

	As $\overline{\Psi}_{+}^L (\ca{O}_{X_{+}}) = 0$, \autoref{lem:resolution} tells us that
	\[
		\gen{ \left\{ \overline{\Psi}_{+}^{L}(\ca{O}_{X_{+}}(-j)) \right\}_{j = 1, \dots, n} } = \gen{ \left\{ \ca{O}_{\mb{P} \times \mb{P}}(0,-j) \right\}_{j = 1, \dots, n} } .
	\]
	Hence, $\oplus_{j=1}^n Q(\ca{O}_{\mb{P}\times\mb{P}}(0,-j))$ is a generator.

	As $\overline{\Psi}_{+}$ is continuous, $\overline{\Psi}_{+}^L$ preserves compactness.
	By \autoref{lem:resolution} each $Q(\ca{O}_{\mb{P}\times\mb{P}}(0,-j))$ is the image via $\overline{\Psi}_{+}^L$ of a bounded complex of coherent sheaves on $X_{+}$.
	As $X_{+}$ is smooth, such a complex is compact, and thus we get that $\oplus_{j=1}^n Q(\ca{O}_{\mb{P}\times\mb{P}}(0,-j))$ is compact.
\end{proof}

\begin{lem}
	\label{lem:end-algebra-std-flops}
	The pushforward functor $\overline{q}_{\ast} : \dk \rightarrow \derived_{\qc}(X_{+})$ induces an isomorphism of graded algebras $\End_{\dk}^{\bullet}(\oplus_{j=1}^n Q(\ca{O}_{\mb{P}\times\mb{P}}(0,-j))) \simeq \Hom^{\rightarrow}_{\dqc}(\{\ca{O}_{\mb{P}}(-j)\})$.
\end{lem}

\begin{proof}
	This follows from \autoref{lem:resolution} together with the adjunction $\overline{\Psi}_{+}^L \dashv \overline{\Psi}_{+}$.
\end{proof}

\subsection{Standard flops (family case)}
\label{sect:appendix-std-flops-families}
We use the notation of \autoref{sect:flop-flop-inverse} and \autoref{section:std-flops-families}.

\begin{lem}
	\label{lem:K-in-SOD-appendix}
	In SOD \eqref{SOD-roof-families} we have $\mcr{K} = \mcr{A}$ and $\mcr{S}_{+} = \mcr{B}$ where $\mcr{S}_{+} = \cgen{\im(\pi_{-}  q^{\ast})}$, $\pi_{-} = \cone(p^{\ast}p_{\ast} \rightarrow \id )$.
\end{lem}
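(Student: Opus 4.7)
The idea is to prove the two easy inclusions $\mcr{A} \subset \mcr{K}$ and $\mcr{B} \subset \text{Ker}\, p_{\ast}$ by direct cohomological computation, combine these with the SOD provided by \autoref{thm:recollection} to get $\mcr{C} \subset \mcr{B}$ via a semi\-orthogonality-splitting argument, and finally obtain the reverse inclusions by showing $\mcr{B}\cap\mcr{K}=0$.

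First I would compute the relevant pushforwards. The square $E = \mb{P}V_{-}\times_{Z}\mb{P}V_{+}$ is cartesian over $Z$ with $\omega_{\pm}$ smooth hence flat, so flat base change gives $(\pi_{+})_{\ast}\pi_{-}^{\ast} \simeq \omega_{+}^{\ast}(\omega_{-})_{\ast}$ (and symmetrically). Combined with $q\circ i = j_{+}\circ\pi_{+}$ and the projection formula, for $a,b\in\{-n,\dots,-1\}$ and $F\in D_{qc}(Z)$,
\[
q_{\ast}I_{a,b}(F)\;=\;(j_{+})_{\ast}\bigl(\ca{O}_{\mb{P}V_{+}}(b)\otimes\omega_{+}^{\ast}\bigl((\omega_{-})_{\ast}\ca{O}_{\mb{P}V_{-}}(a)\otimes F\bigr)\bigr)\;=\;0
\]
because $(\omega_{-})_{\ast}\ca{O}_{\mb{P}V_{-}}(a)=0$; the symmetric computation gives $p_{\ast}I_{a,b}(F)=0$, so $\mcr{A}\subset\mcr{K}$ (using cocompleteness of $\mcr{K}$). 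The same formula with $a=0$ and $b=-i\in\{-n,\dots,-1\}$ gives $p_{\ast}I_{0,-i}(F)=0$, and since every mutation $\mb{R}_{a,b}$ (for $a,b\in\{-n,\dots,-1\}$) appearing in $\mb{R}_{k}$ sits in a triangle $\mb{R}_{a,b}X\to X\to I_{a,b}I_{a,b}^{L}(X)$ whose third term lies in $\mcr{A}\subset\textup{Ker}\,p_{\ast}$, applying $p_{\ast}$ yields $p_{\ast}\mb{R}_{k}I_{0,-i}(F)\simeq p_{\ast}I_{0,-i}(F)=0$, so $\mcr{B}\subset\textup{Ker}\,p_{\ast}$.

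With these containments, SOD \eqref{SOD-roof-families-appendix} gives $\langle\mcr{A},\mcr{B}\rangle={}^{\perp}p^{\ast}D_{qc}(X_{-})=\textup{Ker}\,p_{\ast}$, while \autoref{thm:recollection} gives $\langle\mcr{K},\mcr{C}\rangle=\textup{Ker}\,p_{\ast}$ as well. For $\mcr{C}\subset\mcr{B}$, take $C\in\mcr{C}$ and decompose it in $\langle\mcr{A},\mcr{B}\rangle$ as a triangle $A\to C\to B$. The semi\-orthogonality $\textup{Hom}(\mcr{C},\mcr{K})=0$ of the main SOD, together with $\mcr{A}\subset\mcr{K}$, forces the map $A\to C$ to vanish, so $B\simeq C\oplus A[1]$. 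Since $\mcr{B}$ is cocomplete hence thick, $A[1]\in\mcr{B}$; but $\textup{Hom}(\mcr{B},\mcr{A})=0$ from \eqref{SOD-roof-families-appendix} implies $\mcr{A}\cap\mcr{B}=0$, so $A=0$ and $C\simeq B\in\mcr{B}$.

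The main obstacle is the reverse inclusion $\mcr{B}\subset\mcr{C}$ (equivalently $\mcr{K}\subset\mcr{A}$), which reduces to proving $\mcr{B}\cap\mcr{K}=0$: indeed, given $B\in\mcr{B}$ and its decomposition $K\to B\to C$ in $\langle\mcr{K},\mcr{C}\rangle$, the previous paragraph gives $C\in\mcr{C}\subset\mcr{B}$, so $K$ lies in $\mcr{B}$ by the triangulated closure of $\mcr{B}$; then $K\in\mcr{B}\cap\mcr{K}=0$ forces $B\simeq C\in\mcr{C}$, and a symmetric argument yields $\mcr{K}\subset\mcr{A}$. To show $\mcr{B}\cap\mcr{K}=0$ I would take $K\in\mcr{B}\cap\mcr{K}$, decompose it in the unmutated blow-up SOD \eqref{SOD-roof-non-mutated} (or equivalently in the SOD of $\mcr{B}$ induced by \eqref{SOD-roof-families-appendix}), and exploit the constraint $q_{\ast}K=0$: the pieces with first index in $\{-n,\dots,-1\}$ push forward to zero by Step~1, whereas the pieces with first index $0$ push forward to $J_{-i}(F_{i})=(j_{+})_{\ast}(\ca{O}_{\mb{P}V_{+}}(-i)\otimes\omega_{+}^{\ast}F_{i})$; the mutations through $\mcr{A}\subset\textup{Ker}\,q_{\ast}$ are transparent to $q_{\ast}$, so the filtration of $K$ maps to a filtration in $D_{qc}(X_{+})$ whose associated graded is $\bigoplus J_{-i}(F_{i})$. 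Tracking this tower together with the fully faithfulness of each $J_{-i}$ and the semi\-orthogonality of the pieces of $\mcr{B}$ in $D_{qc}(\widehat{X})$ is the substantive content of the argument; this is the computational input that is packaged as \autoref{lem:natural-transf-family-case-appendix} in the appendix, and once established it forces every piece $F_{i}$ to vanish, giving $K\in\mcr{A}\cap\mcr{B}=0$ as required.
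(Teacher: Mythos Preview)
Your overall architecture is essentially the same as the paper's: both arguments boil down to showing that an element $K'\in\mcr{B}$ with $q_{\ast}K'\simeq 0$ must vanish (this is exactly your $\mcr{B}\cap\mcr{K}=0$, and it is precisely the claim ``$K'\simeq 0$'' in the paper's proof). Your preliminary inclusions $\mcr{A}\subset\mcr{K}$ and $\mcr{B}\subset\textup{Ker}\,p_{\ast}$ are correct, and your reduction of $\mcr{K}\subset\mcr{A}$ and $\mcr{B}\subset\mcr{C}$ to the single statement $\mcr{B}\cap\mcr{K}=0$ is valid (modulo a harmless slip: in the SOD $\langle\mcr{A},\mcr{B}\rangle$ the decomposition triangle is $B\to C\to A$, not $A\to C\to B$; your splitting argument survives the correction).

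The genuine gap is at the end. You assert that the vanishing $\mcr{B}\cap\mcr{K}=0$ is ``packaged as \autoref{lem:natural-transf-family-case-appendix}'', but that proposition does not give what you need. It computes $(\textup{id}\times q)_{\ast}$ on kernel-level $\mathrm{R}\mathcal{H}om$'s only for $j<i$, i.e.\ in the \emph{non-vanishing} direction of the SOD of $\mcr{B}$; in the other direction the source is zero while the target $\textup{Hom}(J_{-i}(F),J_{-j}(G))$ is generally nonzero, so $q_{\ast}|_{\mcr{B}}$ is \emph{not} fully faithful and there is no formal reason why a filtered object pushing to zero forces its graded pieces to vanish. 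The paper proves this step by a completely different, more elementary, argument: it localises to an affine open $U\subset Z$ trivialising $V_{+}$, then tensors $q_{\ast}K'$ by the dual normal bundle $N_{\mb{P}V_{+}/X_{+}}^{\vee}$ (and later its exterior powers) and takes $\textup{R}\Gamma$. The point is that $\textup{R}\Gamma\big(\ca{O}_{\mb{P}V_{+}}(-k+1)\big)=0$ for $k\in\{2,\dots,n\}$, so all contributions from $T'$ disappear and one reads off $\textup{R}\Gamma(F)=0$, whence $F=0$ on the affine chart; an induction on exterior powers then kills $T'$ as well. You should replace the appeal to \autoref{lem:natural-transf-family-case-appendix} with this normal-bundle computation.
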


\begin{proof}
	It is enough to prove $\mcr{K} = \mcr{A}$ as the other statement will then follow from \autoref{thm:formal-statement-4-sods}.
	Given an object $K \in \mcr{K}$, its projection to $p^{\ast}\derived_{\qc}(X_{-})$ is zero.
	Therefore, $K$ can be decomposed in terms of the remaining subcategories in SOD \eqref{SOD-roof-families}.
	In particular, there exists a distinguished triangle $K' \rightarrow K \rightarrow K''$ where $K' \in \mcr{B}$ and $K'' \in \mcr{A}$.

	Our aim is to show that $K'$ is zero, so we can assume we are in the following local situation: $Z$ is affine, $\mb{P}V_{+} \simeq \mb{P}^n \times Z \subset X_{+}$.
	As $K' \in \mcr{B}$ we know that there exist maps
	\begin{equation}
		\label{eqn:filtration-k}
		0 = E_0 \rightarrow E_1 \rightarrow \dots \rightarrow E_{n} = K'
	\end{equation}
	where $E_i \in \mcr{B}$ and $\cone(E_i \rightarrow E_{i+1}) = \mb{R}_{-i} I_{0,-i-1}(F_{i+1})$, $F_{i+1} \in \derived_{\qc}(Z)$, $i=0,\dots,n-1$.

	An explicit computation shows that
	\[
		(\omega_{+})_{\ast} q_{\ast} (\mb{R}_{-i} I_{0,-i-1}(F_{i+1}) \otimes q^{\ast} N^{\vee}_{\mb{P}V_{+}/X_{+}}) \simeq 0
	\]
	for $i = 1, \dots, n$, and $(\omega_{+})_{\ast} q_{\ast}( I_{0,-1}(F_{1}) \otimes q^{\ast} N^{\vee}_{\mb{P}V_{+}/X_{+}}) \simeq \mathrm{R}\Gamma(F_1)^{\oplus n+1}$.
	As $q_{\ast}K' \simeq 0$, tensoring \eqref{eqn:filtration-k} with $q^{\ast} N^{\vee}_{\mb{P}V_{+}/X_{+}}$ and pushing down to $Z$ we get $\mathrm{R}\Gamma(F_1)^{\oplus n+1} \simeq 0$, and as $Z$ is affine, $F_1 \simeq 0$.

	Proceeding inductively using exterior powers of the dual of the normal bundle we deduce that $F_i \simeq 0$ for every $i$, and therefore $K' \simeq 0$.
\end{proof}

We will make use of the following notation: if $\ca{E}$ and $\ca{F}$ are two Fourier--Mukai kernels for functors $\derived_{\qc}(X_1) \rightarrow \derived_{\qc}(X_2)$, $\derived_{\qc}(X_2) \rightarrow \derived_{\qc}(X_3)$, the Fourier--Mukai kernel for the composition will be denoted as $\ca{F} \ca{E} := (p_{13})_{\ast} (p_{12}^{\ast}\ca{E} \otimes p_{23}^{\ast}\ca{F})$, with the standard notation for the projections $p_{ij} : X_1 \times X_2 \times X_3 \rightarrow X_{i} \times X_{j}$.

We will denote the Fourier--Mukai kernel for $\mb{R}_{a,b}$, $I_{a,b}$, $I_{a,b}^{L}$ respectively as $\ca{R}_{a,b}$, $\ca{I}_{a,b}$, and $\ca{I}_{a,b}^{L}$.
Explicitly, they are given by
\[
	\renewcommand{\arraystretch}{1.5}
	\begin{array}{l}
		\ca{I}_{a,b} = (\omega_{+}  \alpha_{+} \times i)_{\ast} \Delta_{\ast} \ca{O}_{\mb{P}V_{-} \times_{Z} \mb{P}V_{+}}(a,b),\\
		\ca{I}_{a,b}^{L} = \sigma^{\ast}\ca{I}_{a,b}^{\vee} \otimes p^{\ast}_{\widehat{X}} \omega_{\widehat{X}} [\dim \widehat{X}],\\
		\ca{R}_{a,b} = \cone \left( \Delta_{\ast} \ca{O}_{\widehat{X}} \rightarrow \ca{I}_{a,b} \ca{I}_{a,b}^{L} \right)[-1],
	\end{array}
\]
where, in the following, $\sigma : \widehat{X} \times Z \rightarrow Z \times \widehat{X}$ and $\Delta$ will denote the twist morphism and the diagonal inclusion, respectively.
The map in the third line is given by adjunction.

Let us denote $\ca{R}_{-j}$ the kernel for the functor $\mb{R}_{-j}$ obtained by convolving the kernels of the various $\mb{R}_{a,b}$, and $\ca{R}_{-j+1} \ca{I}_{0,-j}$ the kernel for $\mb{R}_{-j+1}  I_{0,-j}$.

Finally, we denote the kernel of the functor $J_{-i}(-) = (j_{+})_{\ast}(\ca{O}_{\mb{P}V_{+}}(-i) \otimes \omega_{+}^{\ast}-)$ as $\ca{J}_{-i}  = (\omega_{+} \times j_{+})_{\ast} \Delta_{\ast} \ca{O}_{\mb{P}V_{+}}(-i)$.

\begin{lem}
	\label{lem:pushdown-non-family}
	We have an isomorphism $(\id \times q)_{\ast} \ca{R}_{-i+1} \ca{I}_{0,-i} \simeq \ca{J}_{-i}$ for $1 \leq i \leq n$.
\end{lem}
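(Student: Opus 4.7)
The plan is to work at the level of Fourier--Mukai kernels on $Z \times \widehat{X}$ and exploit the fact that $\mb{R}_{-i+1}$ is defined as a composition of right mutations through subcategories $Im(I_{a,b})$ that lie in $\mcr{K}$, so that $(\textup{id}\times q)_\ast$ annihilates them.

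First I would unpack $\mb{R}_{-i+1}\ca{I}_{0,-i}$ iteratively. By definition each individual mutation $\mb{R}_{a,b}$ fits into a distinguished triangle at the kernel level expressing the mutated kernel as a cone built from the image of $\ca{I}_{a,b}$ under the appropriate evaluation/coevaluation. Hence $\ca{R}_{-i+1}\ca{I}_{0,-i}$ is obtained from $\ca{I}_{0,-i}$ by a finite sequence of cone constructions whose ``error terms'' all lie in the essential image of the kernels $\ca{I}_{a,b}$ with $a\in\{-n,\dots,-1\}$ and $b\in\{-i+1,\dots,-1\}$. I would then show that $(\textup{id}\times q)_\ast \ca{I}_{a,b}\simeq 0$ for these indices: writing $q\circ i=j_+\circ\pi_+$ and using that $\pi_+:E=\mb{P}V_{-}\times_Z\mb{P}V_{+}\to\mb{P}V_{+}$ is a projective bundle, the projection formula and flat base change reduce this vanishing to the well-known identity $(\omega_{-})_\ast\ca{O}_{\mb{P}V_{-}}(a)=0$ for $a\in\{-n,\dots,-1\}$ on the projective bundle $\omega_{-}:\mb{P}V_{-}\to Z$. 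Since $(\textup{id}\times q)_\ast$ is exact, applying it to the mutation triangles collapses them and gives
\[
(\textup{id}\times q)_\ast \ca{R}_{-i+1}\ca{I}_{0,-i}\simeq (\textup{id}\times q)_\ast \ca{I}_{0,-i}.
\]

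Next I would identify $(\textup{id}\times q)_\ast \ca{I}_{0,-i}$ with $\ca{J}_{-i}$ by a direct calculation. Using $\ca{O}_E(0,-i)=\pi_+^\ast\ca{O}_{\mb{P}V_{+}}(-i)$ together with the equality $\pi_{-}^\ast\omega_{-}^\ast=\pi_{+}^\ast\omega_{+}^\ast$ (both pull back along the common projection $E\to Z$), one rewrites the kernel as $i_\ast\pi_+^\ast\bigl(\ca{O}_{\mb{P}V_{+}}(-i)\otimes\omega_{+}^\ast(-)\bigr)$. Then $q\circ i=j_+\circ\pi_+$, the projection formula, and the identity $(\pi_+)_\ast\ca{O}_E\simeq\ca{O}_{\mb{P}V_{+}}$ combine to give
\[
(\textup{id}\times q)_\ast\ca{I}_{0,-i}\simeq (\textup{id}\times j_+)_\ast\bigl(\ca{O}_{\mb{P}V_{+}}(-i)\boxtimes\omega_{+}^\ast(-)\bigr)=\ca{J}_{-i},
\]
where I read the right-hand side as a Fourier--Mukai kernel on $Z\times X_{+}$.

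The main obstacle is the bookkeeping in the first step: one must verify that the vanishing $(\textup{id}\times q)_\ast\ca{I}_{a,b}\simeq 0$ genuinely holds at the kernel level (not only after evaluating on each $F\in D_{qc}(Z)$) and that the triangles defining $\mb{R}_{a,b}$ at the kernel level really are triangles on $Z\times \widehat{X}$. Both follow from naturality of the mutation construction and compatibility of convolution of kernels with proper pushforward, but writing this out carefully is the content that requires attention; the rest of the argument is formal.
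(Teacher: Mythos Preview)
Your proposal is correct and follows essentially the same approach as the paper: both identify $(\textup{id}\times q)_\ast\ca{I}_{0,-i}\simeq\ca{J}_{-i}$ by a direct computation (the paper via explicit diagonal maps, you via projection formula and base change along $\pi_+$), and both observe that the mutation cones lie in subcategories whose kernels push forward to zero under $q$. The only cosmetic difference is that the paper packages the iterated mutation triangles into a single map $\eta:\ca{R}_{-i+1}\ca{I}_{0,-i}\to\ca{I}_{0,-i}$ and shows $(\textup{id}\times q)_\ast\mathrm{cone}(\eta)\simeq 0$, whereas you unravel the triangles one at a time; the content is identical.
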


\begin{proof}
	By construction $\ca{R}_{a,b}$ has a map to $\Delta_{\ast} \ca{O}_{\widehat{X}}$.
	Using these maps we get a map $\eta: \ca{R}_{-i+1} \ca{I}_{0,-i} \rightarrow \ca{I}_{0,-i}$.

	It is easy to see that we have an isomorphism $(\id \times q)_{\ast}\ca{I}_{0,-i} \simeq \ca{J}_{-i}$.
	We claim that $(\id \times q)_{\ast} \cone(\eta) \simeq 0$.
	This follows from the fact that $\cone(\ca{R}_{a,b} \rightarrow \Delta_{\ast}\ca{O}_{\widehat{X}}) \simeq \ca{I}_{a,b}\ca{I}_{a,b}^L$ and that we are taking $a,b \in \{-n, \dots, -1\}$.
\end{proof}

\begin{prop}
	\label{lem:natural-transf-family-case-appendix}
	For $1 \leq j < i \leq n$ we have an isomorphism 
	\[
		\mathrm{RHom}_{Z \times \widehat{X}}(\ca{R}_{-i+1} \ca{I}_{0,-i}, \ca{R}_{-j+1} \ca{I}_{0,-j}) \xrightarrow{(\id \times q)_{\ast}} \mathrm{RHom}_{Z \times X_{+}}(\ca{J}_{-i}, \ca{J}_{-j}).
	\]
\end{prop}

\begin{proof}
	By the above lemma, we get a map
	\begin{equation}
	\label{eqn:nat-transf-kernels}
			(\id\times q)_{\ast} \mathrm{R} \ca{H}om_{Z \times \widehat{X}}(\ca{R}_{-i+1}\ca{I}_{0,-i}, \ca{R}_{-j+1}\ca{I}_{0,-j}) \rightarrow \mathrm{R} \ca{H}om_{Z \times X_{+}}(\ca{J}_{-i}, \ca{J}_{-j}).
	\end{equation}

	What we want to do now is to find a deformation over $\mb{A}^1$ of this map whose fibre over $0$ gives the local model of \autoref{section:std-flops}.
	Then, we will obtain the isomorphism we want from this.

	Recall the deformation to the normal bundle, \cite[Chapter 5]{Fulton-intersection-theory}.
	Given any closed immersion $X \hookrightarrow Y$, we can construct a flat family $f : \ca{X} \rightarrow \mb{A}^1$, together with a closed immersion $X \times \mb{A}^1 \hookrightarrow \ca{X}$, such that for any $t \neq 0$ we have $\ca{X}_{t} = Y$ with the given embedding, and $\ca{X}_{0} = \Tot(N_{X/Y})$ with the embedding given by the zero section.
	Let us denote $\widehat{f} : \widehat{\ca{X}} \rightarrow \mb{A}^1$ the deformation to the normal bundle of $\widehat{X}$, and similarly $f_{\pm} : \ca{X}_{\pm} \rightarrow \mb{A}^1$ the deformation to the normal bundle of $X_{\pm}$.
	The maps between $\widehat{\ca{X}}$ and $\ca{X}_{\pm}$ that lift $p$ and $q$ will still be denoted $p$ and $q$.
	We will employ the following notation
	\[
		\begin{tikzcd}
			\widehat{\ca{X}_t} \ar[r, "r_t", hook] & \widehat{\ca{X}} \\
			\mb{P}V_{-} \times_{Z} \mb{P}V_{+} \times \{ t \} \ar[u, hook, "i_{t}"] \ar[r, hook, "s_{t}"] & \mb{P}V_{-} \times_Z \mb{P}V_{+} \times \mb{A}^1 \ar[u, hook, "s"]
		\end{tikzcd}
		\begin{tikzcd}[row sep = 0.5em]
			\mb{P}V_{+} \times \mb{A}^1 \ar[r, "j_{+}", hook] & \ca{X}_{+},\\
			Z \times \{t \} \ar[r, "u_{t}", hook] & Z \times \mb{A}^1.
		\end{tikzcd}
	\]

	We want to prove that the functors $I_{a,b}$ and $\mb{R}_{-i}$ can be constructed flatly in the family.
	As all the varities appearing are flat over $\mb{A}^1$, it means that the kernels representing $I_{a,b}^{\text{fam}}$ and $\mb{R}^{\text{fam}}_{-i}$ must be objects $\ca{I}_{a,b}^{\text{fam}} \in \derived^{\bounded}(Z \times \widehat{\ca{X}})$, $\ca{R}^{\text{fam}}_{-i} \in \derived_{\qc}(\widehat{\ca{X}} \times_{\mb{A}^1} \widehat{\ca{X}})^c$ (they have to be perfect complexes as the functors they induce preserve boundedness and coherence).

	Let us denote $\xi : Z \times \mb{A}^1 \rightarrow Z$ the projection.
	Then, we define
	\[
		\ca{I}_{a,b}^{\text{fam}} := (\xi \times \id)_{\ast} (\id \times s)_{\ast} (\omega_{+}  \alpha_{+} \times \id \times \id)_{\ast} \Delta_{\ast} \ca{O}_{\mb{P}V_{-} \times \mb{P}V_{+} \times \mb{A}^1}(a,b).
	\]
	The Fourier--Mukai transform induced by this complex is the sought family version of $I_{a,b}$.
	Indeed, a long (but simple) exercise in base change for Tor independent diagram \cite[\href{https://stacks.math.columbia.edu/tag/08IB}{Tag 08IB}]{stacks-project} shows
	\[
		(\id \times r_t)^{\ast} \ca{I}_{a,b}^{\text{fam}} \simeq (\omega_{+}  \alpha_{+} \times i_t)_{\ast} \Delta_{\ast} \ca{O}_{\mb{P}V_{-} \times_Z \mb{P}V_{+} \times \{t\}}.
	\]
	Then
	\[
		\left( \ca{I}_{a,b}^{\text{fam}} \right)^{L} := \sigma^{\ast}\left( \ca{I}_{a,b}^{\text{fam}} \right)^{\vee} \otimes p_{\widehat{\ca{X}}}^{\ast}\omega_{\widehat{\ca{X}}}[\dim \widehat{\ca{X}} -1]
	\]
	gives the kernel for the left adjoint of $\ca{I}_{a,b}^{\mathrm{fam}}$.
	Notice that we have
	\[
		(r_t \times \id)^{\ast} \left( \ca{I}_{a,b}^{\text{fam}} \right)^{L} \simeq \sigma^{\ast}\left( (\id \times r_t)^{\ast} \ca{I}_{a,b}^{\text{fam}} \right)^{\vee} \otimes p_{\widehat{\ca{X}}_t}^{\ast} \omega_{\widehat{\ca{X}}_t} [\dim \widehat{\ca{X}}_t],
	\]
	and therefore $\left( \ca{I}_{a,b}^{\text{fam}} \right)^{L}$ restricts to the Fourier--Mukai kernel for $I_{a,b}^L$ on every fibre.

	We define $\ca{R}_{a,b}^{\text{fam}}$ as the cone of the map $\Delta_{\ast} \ca{O}_{\widehat{\ca{X}}} \rightarrow \ca{I}_{a,b}^{\text{fam}}\left( \ca{I}_{a,b}^{\text{fam}} \right)^{L}$ which is constructed by adjunction.
	As the intersection of $\ca{X}_{t} \times \ca{X}_{t}$ and $\left( \widehat{\ca{X}} \times Z \right) \times_{\mb{A}^1} \widehat{\ca{X}}$ in $\widehat{\ca{X}} \times_{\mb{A}^1} \widehat{\ca{X}}$ is Tor independent, the restriction of $\ca{R}_{a,b}^{\text{fam}}$ to every fibre gives the kernel for $\mb{R}_{a,b}$.

	Finally, we define $\ca{R}_{-i}^{\text{fam}}$ as the convolution of the kernels $\ca{R}_{a,b}^{\text{fam}}$ to complete our construction of the family versions of the kernels.

	Define the functor $J_{-i}^{\text{fam}}$ as the one associated to the kernel
	\[
		\ca{J}^{\text{fam}}_{-i} = (\xi \times \id)_{\ast}(\omega_{+} \times \id \times j_{+})_{\ast}\Delta_{\ast} \ca{O}_{\mb{P}V_{-} \times \mb{A}^1}(-i) \in \derived^{\bounded}(Z \times \ca{X}_{+}).
	\]

	Even for the family version, we have $(\id \times q)_{\ast} \ca{R}^{\text{fam}}_{-i+1}\ca{I}^{\text{fam}}_{0,-i} \simeq \ca{J}^{\text{fam}}_{-i}$.
	The argument is the same as in \autoref{lem:pushdown-non-family}: we construct a morphism $\eta: \ca{R}^{\text{fam}}_{-i+1}\ca{I}^{\text{fam}}_{0,-i} \rightarrow \ca{I}^{\text{fam}}_{0,-i}$ and to prove that it is an isomorphism we restrict to each fibre and use the Tor independence of the following diagram
	\begin{equation}
		\label{eqn:tor-independence-diagram-std-flops-families}
		\begin{tikzcd}
			Z \times \widehat{\ca{X}}_t \ar[r, "\id \times r_t", hook] \ar[d, "\id \times q"'] & Z \times \widehat{\ca{X}} \ar[d, "\id \times q"]\\
			Z \times \left( \ca{X}_{+} \right)_{t} \ar[r, hook] & Z \times \ca{X}_{+}
		\end{tikzcd}.
	\end{equation}

	Therefore, map \eqref{eqn:nat-transf-kernels} lifts to a map
	\begin{equation}
	\label{eqn:nat-trans-family}
			(\id \times q)_{\ast} \mathrm{R} \ca{H} om_{\derived^{\bounded}(Z \times \widehat{\ca{X}})}(\ca{R}^{\text{fam}}_{-i+1}\ca{I}^{\text{fam}}_{0,-i}, \ca{R}^{\text{fam}}_{-j+1}\ca{I}^{\text{fam}}_{0,-j}) \xrightarrow{\varphi} \mathrm{R} \ca{H} om_{\derived^{\bounded}(Z \times \ca{X}_{+})}(\ca{J}^{\text{fam}}_{-i}, \ca{J}^{\text{fam}}_{-j}).	
	\end{equation}
	As the family is trivial outside $t = 0$, the restrictions of the above map to the fibres $(\ca{X}_{+})_{t}$, $t \neq 0$, give the map \eqref{eqn:nat-transf-kernels}.
	Therefore, if we call $S = \cone(\varphi)$, it is enough to prove $\mathrm{R}\Gamma(S_{t}) = 0$ for $0 \neq t \in \mb{A}^1$.

	Assume that $\mathrm{R}\Gamma(S_{0}) = 0$, the we claim that the result follows.
	Indeed, by the flatness of $f_{+}$ and base change \cite[\href{https://stacks.math.columbia.edu/tag/08IB}{Tag 08IB}]{stacks-project} we have $\mathrm{R}\Gamma(S_0) \simeq ((f_{+})_{\ast}S)_{0}$.
	Hence, as $\mathrm{Supp}((f_{+})_{\ast}S)$ is closed, if $0 \neq \mathrm{Supp}((f_{+})_{\ast}S)$ then there exists $t \neq 0$ not in the support of $(f_{+})_{\ast}S$, and using again the flatness of $f_{+}$ we get $0 \simeq ((f_{+})_{\ast}S)_{t} \simeq \mathrm{R}\Gamma(S_{t})$, which proves the claim.

	Therefore, to conclude we have only to prove that $\mathrm{R}\Gamma(S_{0}) = 0$; this is done in the following lemma.
\end{proof}

\begin{lem}
	\label{lem:fibre-over-0}
	Let $S$ be the cone of \eqref{eqn:nat-trans-family}, then $\mathrm{R}\Gamma(S_{0}) = 0$.
\end{lem}

\begin{proof}
	To prove this lemma, we will prove that on the fibre $(\ca{X}_{+})_0$ the map becomes the map in the (semi)local model.
	This means that it becomes the map for the setting \eqref{eqn:standard-flop-families} with $X_{+} = \Tot\left( \ca{O}_{\mb{P}V_{+}}(-1)^{\oplus n+1}\right)$.
	This will suffice, as we know that the statement of the lemma is true in the local case.
	To be precise, in \autoref{sect:appendix-std-flops} we proved the statement (1) for the case $Z = \text{pt}$, (2) for the complexes $\mb{R}_{-i+1}\ca{O}_{\mb{P}\times \mb{P}}(0,-i)$.
	In order, we solve these problems as follows.
	\begin{enumerate}[(1)]
		\item Proving that the derived global sections vanish is a local question in $Z$.
		Hence, we can assume that $Z$ is affine and small enough such that $V_{+}$ is trivial.
		If this is the case, then $\mb{P}V_+ \simeq Z \times \mb{P}$, and \eqref{eqn:standard-flop-families} becomes
		\[
			\begin{tikzcd}
				Z \times \mb{P} \ar[d, "p_1"] \ar[r, "{\id \times j_{+}}"] &[3em] Z \times \Tot\left( \ca{O}_{\mb{P}}(-1)^{\oplus n+1}\right)\\
				Z. & {} 
			\end{tikzcd}
		\]
		The functor $J_{-i}$ is now given by $J_{-i}(F) \simeq p_1^{\ast} F \otimes p_2^{\ast}(j_{+})_{\ast}\ca{O}_{\mb{P}}(-i)$.
		Hence, its kernel is
		\[
			(p_1 \times \id \times j_{+})_{\ast} \Delta_{\ast} p_2^{\ast} \ca{O}_{\mb{P}}(-i) \simeq (\Delta \times j_{+})_{\ast} (\ca{O}_{Z \times \mb{P}}(-i)) \simeq  \Delta_{\ast} \ca{O}_Z \boxtimes (j_{+})_{\ast}\ca{O}_{\mb{P}}(-i).
		\]
		Similarly, we have
		\[
			\begin{array}{lcr}
				\ca{I}_{a,b} \simeq \Delta_{\ast} \ca{O}_Z \boxtimes \ca{O}_{\mb{P} \times \mb{P}}(a,b) & \mathrm{and} & \ca{R}_{-j+1} \ca{I}_{0,-j} \simeq \Delta_{\ast} \ca{O}_Z \boxtimes \mb{R}_{-j+1} \ca{O}_{\mb{P} \times \mb{P}}(0,-j).
			\end{array}
		\]
		As all the derived homs we are interested in are between perfect complexes, these splittings as an exterior tensor product tell us that the derived homs split in a part coming from $Z \times Z$, and a part coming from either $\Tot(\ca{O}_{\mb{P} \times \mb{P}}(-1,-1))$ or $\Tot(\ca{O}_{\mb{P}}(-1)^{\oplus n+1})$.
		Hence, it is enough to prove that statement for the case $Z = \text{pt}$.
		\item When $Z = \text{pt}$, the functors we are interested in have source category equal to $\derived(k)$, hence their Fourier--Mukai kernels are given by the objects to which they map $k$.
		More precisely, in this case they are given by $\mb{R}_{-j+1} \ca{O}_{\mb{P} \times \mb{P}}(0,-j)$.
	\end{enumerate}
	
	That the map restricts to the local one on the fibre $(\ca{X}_{+})_0$ follows from the Tor independence of \eqref{eqn:tor-independence-diagram-std-flops-families}.
\end{proof}



\bibliographystyle{alphaurl}

\newcommand{\etalchar}[1]{$^{#1}$}

\end{document}